\providecommand {\norm}[1] {\lVert#1\rVert}
\providecommand {\bignorm}[1] {\Bigl\lVert#1\Bigr\rVert}
\providecommand {\abs}[1] {\lvert#1\rvert}
\providecommand {\set}[1]{\lbrace #1 \rbrace}
\providecommand {\app}[3]{\mathcal{E}^{#1}_{#2} ({#3})}
\providecommand {\inject}{\hookrightarrow}
\providecommand {\inv}[1]{{#1}^{-1}}
\newcommand {\bn} {\ensuremath{\mathbb{N}}}
\newcommand {\bno} {\ensuremath{\mathbb{N}_0}}
\newcommand {\bc} {\ensuremath{\mathbb{C}}}
\newcommand {\mA} {\ensuremath{\mathcal{A}}}
\newcommand {\mE} {\ensuremath{\mathcal{E}}}
\newcommand {\mF} {\ensuremath{\mathcal{F}}}
\newcommand {\mB} {\ensuremath{\mathcal{B}}}
\newcommand {\bop}  {{\ensuremath{\mathcal B ( \ell^2)}}}
\DeclareMathOperator{\cond}{cond}
\newcommand \smult {submultiplicative}
\newcommand \cont {continuous}
\newcommand \BA {Banach algebra}
\newcommand \as {approximation space}
\newcommand \IC {inverse-closed}
\newcommand \nci {norm-controlled inversion}
\newtheorem{prop}{Proposition} [section]
\newtheorem{cor}[prop]{Corollary}
\newtheorem{thm}[prop]{Theorem}
\newtheorem{lem}[prop]{Lemma}
\theoremstyle{definition}
\newtheorem{defn}[prop]{Definition} % [section]
\theoremstyle{remark}
\newtheorem*{rem}{Remark}
\newtheorem*{rems}{Remarks}
\newtheorem{ex}[prop]{Example}
\newcommand{\tfa}{time-frequency analysis}
\newcommand{\fif}{if and only if}
\newcommand{\psdo}{pseudodifferential operator}
 \theoremstyle{definition}
\newcommand{\beqa}{\begin{eqnarray*}}
\newcommand{\eeqa}{\end{eqnarray*}}
\newcommand{\field}[1]{\mathbb{#1}}
\newcommand{\bR}{\field{R}}        %  real numbers
\newcommand{\bN}{\field{N}}        %  natural numbers 
\newcommand{\bZ}{\field{Z}}        %  whole numbers 
\newcommand{\bT}{\field{T}}        %  
 \def\cO{\mathcal{O}}
\def\rd{\bR^d}
\def\<{\left<}
\def\>{\right>}
\def\inv{^{-1}}
\def\mv1{M_v^1}
\newcommand{\vs}{\vspace{3 mm}}
\newcommand{\dyad}{2^k}
\begin{document}
\begin{abstract}
Every    differential subalgebra of a unital $C^*$-algebra is spectrally
invariant.  We derive  a quantitative version of this well-known fact
and show that a minimal amount of smoothness, as given by a
differential norm, already implies norm control. We obtain an explicit
estimate for the differential norm of an 
invertible element $a$.  This estimate depends only on the condition
number of $a$ and the ratio of  two norms. 
\end{abstract}

\title{Norm-Controlled Inversion in Smooth Banach Algebras, I}
%\title{Norm-Controlled Inversion in Differential  Subalgebras of a $C^*$-Algebra}
\author{Karlheinz Gr\"ochenig}
\address{Faculty of Mathematics \\
University of Vienna \\
Nordbergstrasse 15 \\
A-1090 Vienna, Austria}
\email{karlheinz.groechenig@univie.ac.at}
\author{Andreas Klotz}
\address{Faculty of Mathematics \\
University of Vienna \\
Nordbergstrasse 15 \\
A-1090 Vienna, Austria}
\email{andreas.klotz@univie.ac.at}

\subjclass[2000]{41A65,46H30,47L99,47A60}
\date{}
\keywords{Inverse-closed Banach algebra, norm-control, differential
  algebra, approximation space, asymptotic analysis}
\thanks{Both authors were   
 supported by the  project P22746-N13  of the
Austrian Science Foundation (FWF)}
% \thanks{K.\ G.\ was
%   supported in part by the  National Research Network S106 SISE of the
% Austrian Science Foundation (FWF)} 
\maketitle

\section{Introduction}

In many contexts one encounters the problem of inversion of smooth
elements in a Banach algebra. Often the smoothness is preserved under
inversion, and one is challenged to control the smoothness norm of the
inverse. This is the quantitative problem of \nci .

The standard example of norm-controlled inversion occurs already in calculus under
the name of the quotient rule. Let $ C^1(\bT )$ be the algebra of continuously differentiable functions
on the torus $\bT $ and $ C(\bT )$ be the algebra of all
continuous functions on the torus with the norms $\norm{f}_C =
\norm{f}_\infty = \max
_{t\in \bT } |f(t)|$ and $ \norm{f}_{C^1} = \norm{f}_\infty +
\norm{f'}_\infty $. The quotient rule $(1/f)' = -
f' / f^2$ leads to an obvious estimate for  the $C^1$-norm of  $1/f$,
namely,
\begin{equation}
  \label{eq:sont}
\norm{\tfrac{1}{f}}_{C^1} \leq \norm{\tfrac{1}{f}}_{\infty} + \norm{f'}_\infty
  \norm{\tfrac{1}{f}}_\infty ^2 = \Big(  \norm{f}_{C^1}
  \norm{\tfrac{1}{f}}_C + 1\Big) \norm{\tfrac{1}{f}}_C  \, .
\end{equation}
In other words, we can control the norm of the inverse of $f$ in the
subalgebra $C^1(\bT )$ by the norm of $f$ in $C^1(\bT ) $ and the norm of $1/f$ in
$C(\bT )$. This is the phenomenon of norm control in a nutshell. Can we
obtain quantiative estimates for 
the norm of the inverse in a subalgebra,  when we have access to
information about the inverse in an ambient algebra? Usually
invertibility and norm bounds are easier to obtain in the large
algebra.  

In applied mathematics this problem  is underlying the regularity of
solutions of linear systems in infinite-dimensional spaces. If a
matrix possesses off-diagonal decay and is invertible on $\ell ^2$,
then its inverse possesses the same quality of off-diagonal
decay~\cite{Bas90,Bas97,jaffard90,GL04a}. As a consequence the solution of the linear equation
$Ax=b$ with an input vector $b$ of a certain decay   possesses a
solution $x$ with the same decay.  This principle has
numerous and often fundamental  applications in numerical analysis~\cite{DFG10,GRS10,Lin06}, the theory of \psdo s~\cite{Sjo95,gro06}, in frame
theory~\cite{gro04,BCHL06}, in \tfa~\cite{GL04}, and in sampling theory~\cite{gro04,sun08a}. 

In non-commutative geometry   smooth subalgebras and their
spectral invariance  are an important technical tool for the computation of 
the $K$-group of an algebra. The density theorem asserts that the
$K$-groups of a smooth subalgebra coincide with the $K$-groups of the
original $C^*$-algebra~\cite{black06}. 

A general theory of abstract smoothness and an axiomatic construction
of inverse-closed subalgebras was developed in~\cite{GK10,Klo12}. With a minimum
amount of natural structures on a given Banach algebra, namely
unbounded derivations or commutative automorphism groups, one can
construct subalgebras of smooth elements where the smoothness mimicks
the Besov regularity or the H\"older-Lipschitz continuity of functions
on $\rd $. This theory establishes a new link between methods of
approximation theory and the theory of operator algebras. So far, this
construction of smooth subalgebras and their inverse-closed is a
purely \emph{qualitative} theory: if an element is smooth, then its
inverse possesses the same quality of smoothness. 

The obvious next step is to develop  \emph{quantitative} statements. What is
the ``size'' (norm) of the inverse element in the smooth subalgebra?
This is the problem of norm-controlled inversion. 
The quantification is of utmost importance in applied or numerical
constructions, where the control of the constants decides the success
or failure of a method. Naturally, these problems have been
investigated first in the context of the off-diagonal decay of
infinite matrices, see~\cite{Bas90}. It has been known for a long time that
an invertible matrix with exponential decay off the diagonal has an
inverse again with exponential off-diagonal decay~\cite{DMS84}, the
question of the precise constants involved turned out to be
fundamental for many applications. For polynomial off-diagonal decay
Baskakov has studied a subtle version of norm-control~\cite{Bas97}.

The term ``norm control'' is due to Nikolskii~\cite{Nik99}. He studied  ``the invisible
spectrum'' in the algebra of absolutely convergent Fourier
series and obtained estimates for the norms of inverses in this
algebra. Interestingly enough, the Fourier algebra is one of few examples so far, for 
which the lack of norm control  is known. As soon as one
passes to  absolutely  convergent \emph{weighted} Fourier series, one
can prove norm control~\cite{el-fallah02,Nik98}. 

In an  abstract context one may say that \emph{smoothness implies
inverse-closedness}. The topic of this paper is the much stronger
statement that \emph{smoothness implies norm control}. 

In this  paper and a subsequent paper  we will  study the concept of  norm control in smooth subalgebras
in a systematic manner. (a) What is smoothness in an abstract Banach
algebra? (b) Which forms of smoothness are preserved under inversion?
(c) Which forms of smoothness admit norm-control? 
The first two questions were studied and answered (at least
partially) in ~\cite{GK10}, the third question will be the central topic
of this  paper. 

% We first discuss the concept of norm control and give several
% connections to other and classical concepts, such as the condition
% number and the pseudo spectrum. 

In our main result we derive explicit   \nci\ for subalgebras defined
by   an extremely weak
condition of smoothness  that is expressed by  a differential (semi)
norm. This type of smoothness has been used heavily in operator
theory and non-commutative geometry~\cite{BC91,KS94,Rie10} and arises
naturally in approximation theory~\cite{DL93}.

\begin{thm} \label{main0}
  Assume that  \mB\ is a $C^*$-algebra and $\mA \subseteq \mB$ is a
  $*$-subalgebra with a common unit    and a 
  norm satisfying the inequality
\begin{equation}
\norm{ab}_\mA \leq C(\norm{a}_\mA \norm{b}_\mB+\norm{b}_\mA \norm{a}_\mB)\label{eq:diffsnint}
\end{equation}
for all $a, b \in \mA$. 
Then there exist constants $\gamma _j$ depending only the structure  constant
$C$, such that the following norm control estimates hold:

If $a\in \mA $ and $a$ is invertible in $\mB $ with condition number
$\kappa (a) = \norm{a}_\mB \norm{a\inv }_\mB \geq 5$, then % either 
%   \begin{equation}
%     \label{eq:16c}
%     \norm{a \inv}_\mA \leq \gamma _1 \, \frac{\norm{a
%       }_{\mA}}{\norm{a}^2_{\mB}} \,\, e^{\gamma _2\log ^2 \kappa (a)} \,
%       , 
%   \end{equation}
% or
% \begin{equation}
%   \label{eq:17cc}
%     \norm{a \inv}_\mA \leq \gamma _3 \, \frac{\norm{a
%       }_{\mA}}{\norm{a}^2_{\mB}} \,\, e^{\gamma _4 \ln ^2 \Big( \gamma
%       _5     \frac{\norm{a}_\mA ^2}{\norm{a}_\mB ^2}\Big)} \, . 
%   \end{equation}
  \begin{equation}
    \label{eq:17f}
    \norm{a \inv}_\mA \leq \max \Big\{ \gamma _1 \, \frac{\norm{a
      }_{\mA}}{\norm{a}^2_{\mB}} \,\, e^{\gamma _2\ln ^2 \kappa (a)} \,
      , \gamma _3 \, \frac{\norm{a
      }_{\mA}}{\norm{a}^2_{\mB}} \,\, e^{\gamma _4 \ln ^2 \Big( \gamma
      _5     \frac{\norm{a}_\mA ^2}{\norm{a}_\mB ^2}\Big)} \Big\} \, . 
  \end{equation}
\end{thm}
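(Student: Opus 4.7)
The strategy is the classical one for norm-controlled inversion in smooth subalgebras: reduce to a positive self-adjoint element, approximate its inverse by polynomials, and balance the $\mA$-norm of the approximant against the decay of its $\mB$-error via a Neumann-series trick.

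First I would pass to $h := a^*a \in \mB$, which is positive with spectrum in $[\norm{a^{-1}}_\mB^{-2}, \norm{a}_\mB^2]$, so that $\kappa(h) = \kappa(a)^2$ by the $C^*$-identity. Since $a^{-1} = h^{-1}a^*$, one application of \eqref{eq:diffsnint} reduces the task to bounding $\norm{h^{-1}}_\mA$ in terms of $\norm{h}_\mA$, $\norm{h}_\mB$ and $\kappa(h)$; another application gives $\norm{h}_\mA \leq 2C\norm{a}_\mA\norm{a}_\mB$, so the differential smoothness of $a$ transfers to $h$ in a controlled way.

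The technical core is a Bernstein-type inequality for polynomials in $h$. Iterating \eqref{eq:diffsnint} yields $\norm{h^k}_\mA \leq c_k \norm{h}_\mA \norm{h}_\mB^{k-1}$, where the constants satisfy the recursion $c_{k+1} \leq C(1+c_k)$, explicitly solvable and leading to a controlled growth of $c_k$ in $k$. I would pair this with a classical polynomial approximation of $1/t$ on $[\norm{a^{-1}}_\mB^{-2},\norm{a}_\mB^2]$ by a (rescaled Chebyshev) polynomial $p_n$ of degree $n$, whose supremum error is bounded by $\norm{a^{-1}}_\mB^{2} q^n$ with $q = (\sqrt{\kappa(h)}-1)/(\sqrt{\kappa(h)}+1) < 1$. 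Combining the growth of $c_k$ with the coefficient structure of $p_n$ in a basis adapted to the differential norm, one obtains an estimate of the form $\norm{p_n(h)}_\mA \leq \Phi(n)\,\norm{h}_\mA\norm{h}_\mB^{-1}$ for an explicit function $\Phi$.

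The proof then closes by a Neumann-series argument: $h p_n(h) = I - s_n$ with $\norm{s_n}_\mB \leq \norm{h}_\mB \norm{h^{-1}-p_n(h)}_\mB \lesssim \kappa(h) q^n$, so if $n$ is chosen with $\norm{s_n}_\mB < 1/2$ we have $h^{-1} = p_n(h)(I-s_n)^{-1} = p_n(h)\sum_{k\geq 0} s_n^k$, and \eqref{eq:diffsnint} lets us estimate this product in $\mA$ from $\norm{p_n(h)}_\mA$, $\norm{s_n}_\mA$, and the geometric sums in $\mB$. Optimising $n$ produces the stated bound; the two terms in the maximum of \eqref{eq:17f} correspond to two regimes distinguished by whether $\kappa(a)$ or the smoothness ratio $\norm{a}_\mA^2/\norm{a}_\mB^2$ is dominant, and the characteristic exponent $e^{\gamma \ln^2 X}$ is the signature of a logarithmic choice of degree $n$ weighed against a per-degree cost that also grows logarithmically. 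The principal obstacle is precisely the Bernstein bound: for $C>1$ the recursion for $c_k$ is geometric rather than linear, so a naive bound in the monomial basis is too coarse, and one must exploit cancellation in the Chebyshev expansion of $p_n$ to keep $\Phi(n)$ small enough for the optimisation to yield the sub-exponential growth claimed in \eqref{eq:17f}.
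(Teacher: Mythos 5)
Your reduction to a positive element $h=a^*a$ and the general plan of balancing a polynomial approximant's $\mA$-norm against its $\mB$-error are in the spirit of the paper, but the technical heart of your proposal is the wrong one and your proposed repair would not work.

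The paper does \emph{not} build a fast (Chebyshev) polynomial approximant; it writes $b=a^*a/\norm{a^*a}_\mB$, $c=e-b$, and uses the plain Neumann series $b^{-1}=\sum_{n\geq 0}c^n$, which converges in $\mB$ since $\norm{c}_\mB = 1-\kappa(a)^{-2}<1$. The decisive Bernstein-type estimate for $\norm{c^n}_\mA$ is obtained \emph{not} by unit-step iteration as in your recursion $c_{k+1}\leq C(1+c_k)$ (which, as you correctly observe, gives $c_k\sim C^k$, exponential in $k$) but by the dyadic squaring trick: applying the differential inequality to $c^{2n}=c^n\cdot c^n$ gives
$$
\norm{c^{2n}}_\mA \leq 2C\,\norm{c^n}_\mA\norm{c^n}_\mB \leq 2C\,\norm{c^n}_\mA\norm{c}_\mB^n,
$$
so the normalized quantity $\beta_n=\norm{c^n}_\mA\norm{c}_\mB^{-n}$ satisfies $\beta_{2^k}\leq (2C)^k\beta_1$, i.e.\ $\beta_n$ grows only \emph{polynomially} in $n$ at dyadic indices, $\beta_{2^k}\sim n^{\log_2(2C)}$. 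Combined with the submultiplicativity of $\beta_n$ and the dyadic expansion of $n$, this gives $\sum_n\norm{c^n}_\mA\leq\prod_{k\geq 0}\bigl(1+\tfrac{\norm{c}_\mA}{\norm{c}_\mB}(2C)^k\norm{c}_\mB^{2^k}\bigr)$, and the entire $e^{\gamma\ln^2\kappa}$ asymptotics come from analysing this infinite product (via an incomplete Gamma function estimate for the tail), not from optimizing a truncation degree against a convergence rate.

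Your proposed fix — ``exploit cancellation in the Chebyshev expansion of $p_n$'' — is the genuine gap. The only way to estimate $\norm{p_n(h)}_\mA$ from the differential norm inequality is to apply it term by term to products of $h$'s, and the inequality is oblivious to signs; any cancellation in the coefficients of $p_n$ (or in the Chebyshev three-term recursion, which involves subtraction) is invisible to the $\mA$-norm. So with exponentially growing $c_k$ you cannot recover the claimed sub-exponential bound from any choice of $p_n$. The dyadic doubling is essential and has no analogue in your argument. You also do not address where the case distinction in \eqref{eq:17f} comes from; in the paper it arises from comparing the cut-off index $M$ in the infinite product with $\log_2(\ln u/\ln v^{-1})$ against the quantity $\ln(Kc)/\ln u$, which is specifically a feature of the product representation.
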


This result requires several commments. 

(i)  In Theorem~\ref{thm:asymptotics} we will derive explicit expressions
for the constants $\gamma _j$. 

(ii) Observe in particular that for $a\in \mA $ and $a$ invertible in
$\mB$, the inverse $a\inv $ is already in $\mA $. We say that $\mA $
is \IC\ in $\mB $.

(iii)  Though Theorem~\ref{main0} is a result
in operator theory, 
the proof requires methods from approximation theory and the
asymptotics of certain special functions. 

(iv) The case distinction in~\eqref{eq:17f} leads to an important
insight: the norm of $a\inv $ in $\mA $ does not only depend on the
condition number 
$\kappa (a)  $ in $\mB $ and $\norm{a}_\mA$, as one might have
expected from Nikolski's work~\cite{Nik99}, but also
on the embedding ratio $\norm{a } _\mA / \norm{a } _\mB $. This
phenomenon occurs already in the commutative case for $C^1(\bT )$. 

(v)   A norm on a  subalgebra  satisfying \eqref{eq:diffsnint} is called a
differential norm. This property can be considered a weak form of
smoothness in $\mB $. In ~\cite{Rie10}
Rieffel defined a strong Leibniz norm as a differential (semi)norm
satisfying the norm control $\norm{a\inv }_{\mA } \leq C \norm{a
}_{\mA }\norm{a\inv }_{\mB } ^2$ and asked whether every differential
seminorm is a strong Leibniz norm. The example of $n$-times
continuously differentiable functions on the torus $C^n(\bT )$ for
$n>1$ shows  that  this guess  cannot be true in general,
nevertheless Theorem~\ref{main0} states that a differential (semi)norm admits norm
control in a weaker sense. For applications in operator theory and
non-commutative analysis, e.g. for quantitative versions of the
holomorphic functional calculus, Theorem~\ref{main0} might be strong
enough.  

(vi) Theorem~\ref{main0} applies in particular to approximation algebras;
these are subalgebras of a given algebra that are defined by the 
approximation properties with respect to a so-called approximation
scheme or a filtration of subalgebras.

(vii) Note that \eqref{eq:17f} implies that $\norm{a\inv } _{\mA }
\leq \cO ( \kappa (a) ^{C \ln  \kappa (a)})$ for $\kappa \to \infty$. This means  that
$\norm{a\inv } _{\mA }$, as a function of the condition number,  grows
faster than every polynomial. 
 In the sequel~\cite{GK12b}  to this paper  we  will develop results for stronger
concepts of smoothness, namely for subalgebras of a given Banach
algebra with smoothness of Besov-H\"older-Lipschitz type of~\cite{GK10,Klo12}.  These
Banach algebras admit  a generalized  quotient rule, and  we will
prove much stronger statements about norm-control with polynomial
growth $\norm{a\inv } _{\mA }
\leq \cO ( \kappa (a) ^N) $  in the style of \eqref{eq:sont}. Since the
methods are completely different, we prefer to 
split our study of norm control into  two parts. 

The paper is organized as follows: In Section~2 we explain the
abstract  concept of norm control and give some equivalent
definitions. In Section~3 we prove that Banach algebras with a
differential norm admit \nci . Surprisingly, a standard construction
of  approximation theory always yields Banach algebras that admit \nci .  In Section~4 we   
 derive the   asymptotic estimate for the norm-controlling function
 of Theorem~\ref{main0}.  In the last section we will  briefly discuss
 other possible concepts of norm control and  the failure of \nci . At
this time the main example without norm control is the pair $\big(\mA (\bT ), C(\bT )\big)$
of absolutely convergent Fourier series and of continuous functions on
the torus~\cite{Nik99}.

\section{What is Norm-Controlled Inversion?}
\label{sec:norm-contr-invers-1}

In this section we will explore some facets of the abstract notion of
norm control.

\begin{defn} \label{dc:norm}
    Let $\mA \subseteq \mB$ be \BA s with common unit.

(i) We say that $\mA $ is inverse-closed in $\mB $, if every element
$a\in \mA $ that is invertible in $\mB $ is already invertible in $\mA
$, in short, $a\in \mA , a\inv \in \mB \, \Rightarrow \, a\inv \in \mA
$. 

(ii) We say that 
    $\mA$ admits \nci\ in \mB, if there is a function $h: (0,\infty
    )^2 \to (0,\infty )$ that
    satisfies 
\begin{equation*}
\norm{ a \inv}_\mA \leq h( \norm{a}_\mA , \norm{ a \inv}_\mB ) \, .
\end{equation*}
\end{defn}
Clearly, norm control is  a stronger property than
inverse-closedness.  If $\mA $ admits norm control in $\mB $, then, in particular, $\mA $
is \IC\ in $\mB $. 
 Inverse-closedness is a qualitative
property,  norm control is a quantitative property. The function $h$
provides an estimate for the norm of $a\inv $ in the smaller algebra
as a function of the norm of $a$ in the small algebra and the norm of
$a\inv $ in the large algebra.

 The control function $h$ in Definition~\ref{dc:norm} is not unique. To obtain a
well-defined control function, we use a slightly different definition
of norm control. 

\begin{lem}
  Let $\mA \subseteq \mB$ be \BA s with common unit element. Then
  \mA\ admits norm control, \fif\ % for every $\delta >0$  there is a
%   constant $c = \phi (\delta )$ 
there exists a function $\phi  : (0,1) \to (0,\infty )$, 
  such that,   for $a \in \mA$, 
  \begin{equation}
    \label{eq:3}
   \norm{a}_\mA \leq 1 \qquad \text{ and } \qquad  \norm{a \inv}_\mB \leq 1/\delta
  \end{equation}
implies that $a \inv \in \mA $  and 
$$\norm{a \inv}_\mA \leq \phi (\delta ) \, .
$$
 Explicitly, $\phi (\delta ) $ can be chosen to be 
\begin{equation} \label{eq:visconst}
  \phi(\delta)=\sup \set{\norm{a \inv}_\mA \colon \norm{a}_\mA \leq 1, \norm{a \inv}_\mB \leq 1/\delta} \,.
\end{equation}
The norm control function $h$ is then 
\begin{equation}
  \label{eq:c1}
  h(\norm{a}_\mA , \norm{a\inv} _\mB) =  \frac{1}{\norm{a} _\mA } \phi
  \Big( \frac{1}{\norm{a}_\mA  \norm{a\inv} _\mB}\Big)   \, .
\end{equation}
\end{lem}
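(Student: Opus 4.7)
The equivalence is essentially a homogeneity statement. Both implications exploit the scaling $a \mapsto a/\norm{a}_\mA$, which sends every nonzero element of $\mA$ to the unit $\mA$-sphere and converts the product $\norm{a}_\mA \norm{a\inv}_\mB$ into the single parameter $\delta$. The explicit formulas for $\phi$ and $h$ in \eqref{eq:visconst} and \eqref{eq:c1} should then drop out by unwinding this scaling in each direction.

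For the direction $(\Leftarrow)$, assume the function $\phi$ with the stated property is given. I would pick $a \in \mA \setminus \{0\}$ that is invertible in $\mB$ and set $\tilde a := a/\norm{a}_\mA$, so that $\norm{\tilde a}_\mA = 1$ and $\norm{\tilde a\inv}_\mB = \norm{a}_\mA \norm{a\inv}_\mB$. Choosing
\[
\delta := \frac{1}{\norm{a}_\mA\, \norm{a\inv}_\mB},
\]
which lies in $(0,1]$ thanks to the continuous embedding $\norm{\cdot}_\mB \leq \norm{\cdot}_\mA$ and the elementary bound $\norm{a}_\mB \norm{a\inv}_\mB \geq 1$, the hypothesis provides $\tilde a\inv \in \mA$ with $\norm{\tilde a\inv}_\mA \leq \phi(\delta)$. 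Rescaling back yields $a\inv \in \mA$ and $\norm{a\inv}_\mA \leq \phi(\delta)/\norm{a}_\mA$, which is precisely \eqref{eq:c1}.

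For the converse $(\Rightarrow)$, let $h$ be any norm control function for $\mA$ in $\mB$, and define $\phi$ by the supremum in \eqref{eq:visconst}. The required implication is then tautological, \emph{provided} $\phi(\delta) < \infty$. This finiteness is the one technical point, since it is not automatic from the bare existence of $h$; it can always be arranged by replacing $h$ with its monotone upper envelope $\tilde h(s,t) := \sup\{h(s',t') : s' \leq s,\, t' \leq t\}$, which remains a valid norm control function and which bounds $\norm{a\inv}_\mA$ on the feasible set of \eqref{eq:visconst} by the finite constant $\tilde h(1, 1/\delta)$. Modulo this bookkeeping, plus a remark on the boundary value $\delta = 1$ (which is either excluded by strict inequality or handled by continuous extension), the proof is entirely definitional; the substance of the lemma lies not in its proof but in fixing a canonical normalized form of the control function that is then convenient for the quantitative analysis in the later sections.
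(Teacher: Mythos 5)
Your proof of the direction in which the paper actually invests its argument---namely that the existence of a finite-valued $\phi$ on $(0,1)$ yields a norm-control function $h$ via the scaling $a\mapsto a/\norm{a}_\mA$---is correct and is precisely the paper's computation: one checks $\norm{\tilde a}_\mA=1$, $\norm{\tilde a\inv}_\mB=\norm{a}_\mA\norm{a\inv}_\mB$, invokes $\phi$, and unscales to land on \eqref{eq:c1}. The only addition you make (the bound $\delta\leq 1$ via $\norm{\cdot}_\mB\leq\norm{\cdot}_\mA$ and $\norm{a}_\mB\norm{a\inv}_\mB\geq1$) is harmless and matches the paper's tacit normalizations. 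The paper also has a small typo ($\phi(1/\delta)$ where it means $\phi(\delta)$) that you correctly avoid.

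Where your proof departs from the paper is in trying to seriously establish the forward implication, i.e.\ that the supremum in \eqref{eq:visconst} is finite whenever a norm-control function $h$ exists. You are right to flag this as the one nontrivial point --- the paper silently skips it --- but the fix you propose does not close the gap. The monotone upper envelope $\tilde h(s,t)=\sup\{h(s',t'): s'\leq s,\ t'\leq t\}$ need not be finite: nothing in Definition~2.1(ii) forbids $h$ from blowing up as $s\to 0$ (take, hypothetically, $h(s,t)\geq 1/s$), in which case $\tilde h(1,1/\delta)=\infty$ and $\tilde h$ fails to be a function into $(0,\infty)$, so it is \emph{not} ``a valid norm control function.'' The constraint that actually keeps the feasible pairs $(\norm{a}_\mA,\norm{a\inv}_\mB)$ inside a compact box of $(0,\infty)^2$ (via $\norm{a}_\mA\geq\norm{a}_\mB\geq\norm{a\inv}_\mB^{-1}\geq\delta$) does not help either, since $h$ is an arbitrary function and need not be bounded on compacta. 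So as written, your argument does not prove $\phi(\delta)<\infty$. If you want the ``only if'' direction to be honest, you would have to either (a) strengthen Definition~2.1(ii) by requiring $h$ to be locally bounded or monotone, which is the implicit convention here, or (b) drop the claim that \eqref{eq:visconst} is automatically finite and simply present $\phi$ as the canonical normalization once finiteness is \emph{assumed}, which is what the paper's one-directional proof effectively does.
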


\begin{proof}
  Given $a\in \mA $ with inverse $a\inv \in \mB $, we set $b=
  \tfrac{a}{\norm{a}_{\mA }}$. Then $\norm{b}_\mA = 1$ and
  $\norm{b\inv}_{\mB } = \norm{a}_\mA \norm{a\inv}_\mB = 1/\delta
  $. Then by definition of $\phi $ we have 
$$
\norm{a}_\mA \norm{a\inv}_\mA = \norm{b\inv}_{\mA } \leq \phi
(\frac{1}{\delta }) = \phi \big(\frac{1}{\norm{a}_\mA
  \norm{a\inv}_\mB}\big) \, ,
$$
and this is \eqref{eq:c1}. 
\end{proof}

The definition of norm control is adapted from
Nikolski~\cite{Nik99}. He studied inversion in  commutative Banach
algebras $\mA $ embedded in $C(X)$, the space of continuous functions  on
some compact set $X$. Then $\mA $ is called  $\delta $-visible for
fixed $\delta >0$, if there is a constant $ \phi (\delta) >0$, called the visibility
constant, such that 
 $\norm{f}_\mA \leq 1$ and $   \abs{f(x)} \geq \delta$ for all $x\in
 X$ implies that $\norm{ f\inv }_\mA \leq  \phi (\delta )$. 
Nikolskii determined the visibility constants explicitly for 
 the algebra of absolutely convergent Fourier series $\mA (\bT ) $.
 The visibility constants are  $\phi 
 (\delta ) = (2\delta ^2-1)\inv $ for $\delta > 1/\sqrt{2}$ and $\phi (\delta ) =
 \infty $ for $\delta \leq 1/2$, and seem to be unknown for $1/2\leq \delta \leq
 1/\sqrt{2}$. 
 Since  Definition~\ref{dc:norm}
 requires $\phi (\delta )$ to exist for every $\delta >0$, the Fourier
 algebra does not admit \nci .  
Related concepts were also studied in~\cite{Bjo72,Olof01}.

% \subsubsection*{Condition numbers}
% \label{sec:condition-numbers}
% Norm controlled inversion can also be described by condition numbers.
% We denote the condition number of an element $a\in \mB $  by
% $\kappa (a)=\norm{a}_\mB \norm{a \inv}_\mB$.  

The following lemma contains several equivalent expressions for
Nikolski's visibility constant. %several  different  is straightforward.
\begin{lem} \label{condnec} 
  \begin{align*}
  \phi(\delta)&=\sup \set{\norm{a \inv}_\mA \colon \norm{a}_\mA \leq 1, \norm{a \inv}_\mB = 1/\delta} \\
              &=\sup \set{\norm{a \inv}_\mA \colon \norm{a}_\mA =1,  \norm{a \inv}_\mB  \leq 1/\delta} \\
              &=\sup \set{\norm{a}_\mA\norm{a \inv}_\mA \colon  \norm{a}_\mA \norm{a \inv}_\mB  \leq 1/\delta}
\end{align*}
\end{lem}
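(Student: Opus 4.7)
The plan is to establish the three equalities via rescaling arguments that exploit the absolute homogeneity of both norms. For any $\lambda>0$ we have $\norm{\lambda a}_\mA = \lambda\norm{a}_\mA$ and $\norm{\lambda a}_\mB = \lambda\norm{a}_\mB$, whence $\norm{(\lambda a)\inv}_\mA = \lambda^{-1}\norm{a\inv}_\mA$ and $\norm{(\lambda a)\inv}_\mB = \lambda^{-1}\norm{a\inv}_\mB$, so the two products $\norm{a}_\mA\norm{a\inv}_\mA$ and $\norm{a}_\mA\norm{a\inv}_\mB$ are invariant under the scaling $a\mapsto \lambda a$. In each of the three equalities the easy direction comes from the inclusion of constraint sets; the nontrivial direction is obtained by choosing the right $\lambda$.

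For the first equality, given $a$ with $\norm{a}_\mA\leq 1$ and $\norm{a\inv}_\mB\leq 1/\delta$, I would set $\lambda=\delta\norm{a\inv}_\mB\in(0,1]$ and consider $b=\lambda a$. Because $\lambda\leq 1$ we still have $\norm{b}_\mA\leq 1$; the constraint $\norm{b\inv}_\mB=1/\delta$ is now saturated exactly; and $\norm{b\inv}_\mA=\norm{a\inv}_\mA/\lambda\geq\norm{a\inv}_\mA$, so tightening the inequality $\norm{a\inv}_\mB\leq 1/\delta$ into an equality does not decrease the supremum. For the third equality, the key observation is that both the constraint $\norm{a}_\mA\norm{a\inv}_\mB\leq 1/\delta$ and the objective $\norm{a}_\mA\norm{a\inv}_\mA$ are scale-invariant; normalizing $\norm{a}_\mA=1$ reduces the third supremum exactly to the second. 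For the second equality, the rescaling $b=a/\norm{a}_\mA$ sends elements of the $\phi$-set into the set defining the second supremum with $\norm{b\inv}_\mA=\norm{a}_\mA\norm{a\inv}_\mA$, and the scale-invariant reformulation from the third equality then closes the loop, identifying $\phi(\delta)$ with the common value of all three expressions.

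The subtlety I expect to require the most care is the second equality: the normalization $b=a/\norm{a}_\mA$ yields $\norm{b\inv}_\mA=\norm{a}_\mA\norm{a\inv}_\mA\leq\norm{a\inv}_\mA$, which points in the opposite direction of what a naive pointwise comparison would demand. The resolution is to route the argument through the scale-invariant third expression rather than compare $\norm{b\inv}_\mA$ and $\norm{a\inv}_\mA$ directly; once the third equality identifies the supremum with a scale-invariant quantity, all three constraint sets describe slices of the same invariant object and the common value falls out without requiring a pointwise improvement.
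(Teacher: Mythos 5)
Your treatment of the first equality is correct and is essentially the paper's argument: for a maximizing sequence $a_n$ one rescales by $\lambda_n = \delta\norm{a_n\inv}_\mB \leq 1$, which preserves $\norm{\cdot}_\mA\leq 1$, saturates the $\mB$-constraint, and only increases the $\mA$-objective. Your reduction of the third supremum to the second by normalizing $\norm{a}_\mA=1$ is also correct, since both the constraint $\norm{a}_\mA\norm{a\inv}_\mB$ and the objective $\norm{a}_\mA\norm{a\inv}_\mA$ are scale-invariant; this is what the paper calls ``a reformulation.''

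The second equality, as you suspected, is where the argument does not close, and the proposed detour does not repair it. Write $S_0=\phi(\delta)$, $S_1,S_2,S_3$ for the three right-hand sides in order. Inclusion gives $S_1\leq S_0$ and $S_2\leq S_0$. The normalization $b=a/\norm{a}_\mA$ maps the $\phi$-set into the $S_2$-set, but with objective $\norm{b\inv}_\mA=\norm{a}_\mA\norm{a\inv}_\mA\leq\norm{a\inv}_\mA$; this only yields $S_2\geq\sup\{\norm{a}_\mA\norm{a\inv}_\mA\}$ over the $\phi$-set, which is $\leq S_0$ rather than $\geq S_0$. Routing through the third expression cannot supply the missing inequality $S_2\geq S_0$ either, because the scale-invariance you invoke applies to $S_2=S_3$ but \emph{not} to $S_0$: the constraints $\norm{a}_\mA\leq 1$ and $\norm{a\inv}_\mB\leq 1/\delta$ transform oppositely under $a\mapsto\lambda a$, so $S_0$ is genuinely not a ``slice of a scale-invariant object.'' In fact $S_2=S_0$ is false in general: taking $\mA=\mB$ with $\norm{e}=1$ gives $S_0=S_1=1/\delta$ while $S_2=S_3=1$, so only $\phi(\delta)=S_0=S_1\geq S_2=S_3$ holds. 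It is worth noting that the paper's own proof of the second equality has the same defect you would need to guard against: the sequence $\tilde a_n=\mu_n a_n$ with $\mu_n=\delta\norm{a_n\inv}_\mB\leq 1$ satisfies $\norm{\tilde a_n}_\mA\leq 1$ and $\norm{\tilde a_n\inv}_\mB=1/\delta$, hence lies in $A_\delta$ and not in $B_\delta=\{a:\norm{a}_\mA=1,\ \norm{a\inv}_\mB\leq 1/\delta\}$. The correct and salvageable content is the first equality together with the reformulation $S_2=S_3$, and that is in fact all that the surrounding development (the derivation of the control function $h$) actually uses.
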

\begin{proof}
Set $A_\delta =  \set{a \in \mA \colon \norm{a}_\mA \leq 1, \norm{a
    \inv}_\mB = 1/\delta} $.  Obviously, 
\[ \sup \set{ \norm{a \inv}_\mA \colon a \in A_\delta} \leq
\phi(\delta) \, .\]
For the reverse inequality 
choose a sequence $a_n \in \mA$, such that 
$$\norm{a_n}_\mB \leq 1/\delta, \quad \norm{a_n}_\mA \leq 1 \qquad
\text{ and } 
\lim_n \norm{a \inv_n}_\mA = \phi(\delta) \,.
$$
Set  $\lambda_n = \delta \norm{a_n \inv }_\mB \inv  \geq 1$ and $
b_n = \lambda_n \inv  a_n$. Then $b_n \in A _\delta $,
because $\norm{b_n}_\mA \leq \norm{a_n}_\mA \leq 1$, $\norm{b_n\inv
}_\mB = \lambda _n \norm{a_n\inv }_\mB = 1/\delta $. Furthermore, 
\begin{align*}
 \sup \set{ \norm{a \inv}_\mA \colon a \in A_\delta} & \geq \liminf
 _{n\to \infty } \norm{b_n\inv}_{\mA }  \\
&= \liminf
 _{n\to \infty } \lambda _n \norm{a_n\inv}_{\mA } \geq \lim _{n\to
   \infty } \norm{a_n\inv}_{\mA } = \phi (\delta ) \, .
  \end{align*}

The second equality is proved with a similar argument. Set  $B_\delta
= \set{a \in \mA  \colon \norm{a}_\mA =1,  \norm{a \inv}_\mB  \leq
  1/\delta}$. Then  $\sup \set{ \norm{a \inv}_\mA \colon a \in B_\delta} \leq \phi(\delta) $. Using again the sequence $a_n$ defined above, we set
$\tilde a_n= \mu_n a_n$, where $\mu_n=\delta \norm{a \inv_n}_\mB \leq 1$. By construction, $\norm{ {\tilde a_n}\inv}_\mB=1/\delta$, $\norm{\tilde a_n}_\mA \leq 1$,
 and $\norm{ {\tilde a_n}\inv}_\mA\geq \norm{a \inv_n}_\mA$, which implies $\lim_{n \to \infty}\norm{ {\tilde a_n}\inv}_\mA = \phi(\delta)$.

The last equality is a just a reformulation of the second line.
\end{proof}
% If we also introduce the mixed condition number  $\kappa_{\mA \to
%   \mB}(a)=\norm{a}_\mA\norm{a \inv}_\mB$, then 
% the last equality in Lemma~\ref{condnec} can be also be  written as
% \[
% \phi(\delta)=\sup \set{\kappa_\mA(a) \colon  \kappa_{\mA \to \mB}(a)  \leq 1/\delta}
% \]

\begin{ex}
If $\mB $ is a $C^*$-algebra, then  the parameter $\delta $ can be interpreted as the smallest 
singular value of an element $a \in \mB $. For instance, a  \BA\ $\mA \subseteq
\bop$ admits \nci\ in \bop, if every operator  $A \in \mA $ that is
onto and satisfies $\norm{A}_\mA  \leq 1$ and $\|A c \|_2 \geq \delta \|c\|_2$
for all $c\in \ell ^2$,
% i.e. $\norm{A\inv}_{\bop} \leq \delta \inv $,
satisfies also $\norm{A\inv
}_\mA \leq \phi (\delta )$. 
\end{ex}

\subsubsection*{Pseudospectrum}
\label{sec:pseudospectrum}

Recall that the $\delta$-pseudospectrum $\sigma_\delta^\mB(a)$ of an element $a$ in a unital \BA\ \mB\ is
\begin{equation}
  \label{eq:4}
  \sigma_\delta^\mB(a)= \set{\lambda \in \bc \colon
    \norm{(\lambda - a)\inv }_\mB > 1/\delta} \cup \sigma^\mB(a) \, .
\end{equation}

% In subalgebras with \nci\ the size of the pseudospectra is
% comparable. %An equivalent description of \nci\ can be given in terms of the pseudospectrum.

 Expressing the definition of $\phi $ from \eqref{eq:visconst} in
 terms of the pseudospectrum, we obtain the following reformulation of
 \nci .

\begin{lem}
A Banach algebra   $ \mA \subseteq \mB $  admits norm-controlled
inversion  if and only if there is a function $\phi $  such that for
all $\delta \in (0,1)$  and all   $a \in \mA$ with $\norm{a}_\mA =1$ 
\begin{equation}
  \label{eq:5}
   0 \notin \sigma_\delta^\mB(a) \quad \text{implies that }\quad  
 0  \notin \sigma_{1/\phi (\delta )}^{\mA}(a) \, .
\end{equation}
\end{lem}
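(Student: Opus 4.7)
The plan is to unwind the definition of the pseudospectrum so the statement reduces to the preceding characterization of norm control via the function $\phi$ from~\eqref{eq:visconst}. The key translation is: by~\eqref{eq:4}, the condition $0\notin\sigma_\delta^\mB(a)$ is equivalent to saying that $a$ is invertible in $\mB$ and $\norm{a\inv}_\mB\leq 1/\delta$; similarly, $0\notin\sigma_{1/\phi(\delta)}^\mA(a)$ is equivalent to $a$ being invertible in $\mA$ with $\norm{a\inv}_\mA\leq \phi(\delta)$. Once these two reformulations are in place the equivalence is essentially tautological.

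For the forward direction, I would assume $\mA$ admits \nci\ and invoke the earlier lemma to obtain a function $\phi:(0,1)\to(0,\infty)$ satisfying~\eqref{eq:3}. Given $a\in\mA$ with $\norm{a}_\mA=1$ and $0\notin\sigma_\delta^\mB(a)$, the translation above yields $\norm{a\inv}_\mB\leq 1/\delta$, so the hypotheses of~\eqref{eq:3} hold, hence $a\inv\in\mA$ with $\norm{a\inv}_\mA\leq \phi(\delta)$. This precisely says $0\notin \sigma_{1/\phi(\delta)}^\mA(a)$.

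For the converse, assume the pseudospectral implication~\eqref{eq:5} for some function $\phi$. If $a\in\mA$ satisfies $\norm{a}_\mA\leq 1$ and $\norm{a\inv}_\mB\leq 1/\delta$, I would first pass to $b=a/\norm{a}_\mA$ (with $\norm{b}_\mA=1$), noting that $\norm{b\inv}_\mB=\norm{a}_\mA\norm{a\inv}_\mB\leq 1/\delta$, i.e.\ $0\notin\sigma_\delta^\mB(b)$. The hypothesis then gives $0\notin\sigma_{1/\phi(\delta)}^\mA(b)$, which translates back to $b\inv\in\mA$ with $\norm{b\inv}_\mA\leq \phi(\delta)$. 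Scaling back establishes the condition of the preceding lemma, so $\mA$ admits \nci.

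The only subtle point — and the closest thing to an obstacle — is the strict versus non-strict inequality in the definition~\eqref{eq:4}: one must check that the union with $\sigma^\mB(a)$ together with the strict inequality $\norm{(\lambda-a)\inv}_\mB>1/\delta$ correctly translates to the non-strict $\norm{a\inv}_\mB\leq 1/\delta$ for the complement. After that, the argument is a direct rewriting and no new estimates are needed.
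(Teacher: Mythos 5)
Your unwinding of the pseudospectrum is correct: since $\sigma_\delta^\mB(a)=\set{\lambda : \norm{(\lambda-a)\inv}_\mB > 1/\delta}\cup\sigma^\mB(a)$, the condition $0\notin\sigma_\delta^\mB(a)$ says precisely that $a$ is invertible in $\mB$ with $\norm{a\inv}_\mB\leq 1/\delta$, and likewise for $\mA$; the paper offers no proof and clearly regards this as the intended rewriting. The forward direction is fine as you state it. In the converse direction, however, the sentence ``scaling back establishes the condition of the preceding lemma'' papers over a small gap: from $\norm{b\inv}_\mA\leq\phi(\delta)$ and $b\inv=\norm{a}_\mA\, a\inv$ you get $\norm{a\inv}_\mA\leq\phi(\delta)/\norm{a}_\mA$, and since $\norm{a}_\mA\leq 1$ this is \emph{not} $\leq\phi(\delta)$, so you have not produced a bound by a function of $\delta$ alone. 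Two easy repairs: (i) note that $\norm{a}_\mA$ is bounded below in terms of $\delta$, because $\norm{a}_\mA\gs\norm{a}_\mB\geq 1/\norm{a\inv}_\mB\geq\delta$, so $\norm{a\inv}_\mA\ls\phi(\delta)/\delta$, which is again a function of $\delta$ only; or (ii) invoke Lemma~\ref{condnec}, whose second displayed equality shows that the supremum defining $\phi(\delta)$ is unchanged whether taken over $\norm{a}_\mA\leq 1$ or $\norm{a}_\mA=1$, so verifying the norm-control condition on the unit sphere of $\mA$ already certifies it with the same $\phi$. With either patch the argument is complete and coincides with what the paper intends.
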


%\begin{proof}
% By definition $0 \not \in \sigma_\delta^\mB(a)$ means that
% $\norm{a\inv}_\mB \leq 1/\delta $. If $\mA $ admits norm
% control in $\mB $, then we have $\norm{a\inv}_\mA \leq
% \phi (\delta ) $. This
% means that $0 \notin \sigma_{1/\phi (\delta)}^\mA(a)$. ****************
%\end{proof}

\section{Norm-Controlled Inversion in Differential Subalgebras}
%\section{Norm-Controlled Inversion in Abstract Smoothness Algebras}
\label{sec:norm-contr-invers-4}

\subsection{ Differential Seminorms}
\label{sec:DiffSN}
A  general framework for describing smoothness in abstract  \BA s was
introduced in an  influential paper of  Blackadar and
Cuntz~\cite{BC91}.   Assume that  $\mA \subseteq
\mB$ are two \BA s with a common  unit element. A \emph{differential norm} (of order
1) is a norm on \mA\ that satisfies 
\begin{equation} 
\norm{ab}_\mA \leq C(\norm{a}_\mA \norm{b}_\mB +\norm{b}_\mA \norm{a}_\mB)\label{eq:diffsn}
\end{equation}
for all $a, b \in \mA$. In this case we call \mA\ a \emph{differential
  subalgebra} of \mB.   

Differential seminorms formalize a very general concept of smoothness.
Similar concepts   were introduced by Kissin
and Shulman\cite{KS94} and
Rieffel~\cite{Rie10}. A norm satisfying~\eqref{eq:diffsn} is also referred to
as a Leibniz norm on $\mA $. 

Let us recall a simple property of the constant occuring in differential
norms. We tacitly  assume that the  norms satisfy
$\norm{e}_\mA = \norm{e}_\mB   = 1$.

\begin{lem}
Assume that  $\mA \subseteq
\mB$ are two \BA s with a common  unit element and   let $\norm{\cdot
} _\mA $ a differential norm on $\mA $. Then either the norms
$\norm{\cdot } _\mA $ and $\norm{\cdot } _\mB $ are equivalent (and
$\mA $ is a closed subalgebra of $\mB $) or the structure constant $C$
in  \eqref{eq:diffsn} is  at least $1$. 
\end{lem}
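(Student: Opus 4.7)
The plan is to exploit the differential norm inequality at the unit element, which turns the Leibniz-type estimate into a direct comparison between the two norms. Since $\|e\|_\mA = \|e\|_\mB = 1$ by the standing convention, substituting $a = e$ into \eqref{eq:diffsn} yields, for every $b \in \mA$,
\begin{equation*}
\|b\|_\mA = \|e\,b\|_\mA \leq C\bigl(\|e\|_\mA \|b\|_\mB + \|b\|_\mA \|e\|_\mB\bigr) = C\|b\|_\mB + C\|b\|_\mA.
\end{equation*}
Rearranging gives $(1-C)\|b\|_\mA \leq C\|b\|_\mB$, so the outcome depends purely on whether the coefficient $1-C$ is positive.

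If $C < 1$, divide by $1-C$ to obtain the one-sided comparison
\begin{equation*}
\|b\|_\mA \leq \tfrac{C}{1-C}\,\|b\|_\mB \qquad \text{for all } b \in \mA.
\end{equation*}
The reverse bound $\|b\|_\mB \lesssim \|b\|_\mA$ comes for free from the standing hypothesis that $\mA$ is a Banach subalgebra of $\mB$ with common unit (the inclusion is continuous; if one preferred, one could deduce this from the closed graph theorem, since $\mA$ is complete in its own norm). Combining the two bounds produces the equivalence of $\|\cdot\|_\mA$ and $\|\cdot\|_\mB$ on $\mA$. Completeness of $\mA$ under $\|\cdot\|_\mA$ then transfers to completeness under the restriction of $\|\cdot\|_\mB$, making $\mA$ a closed subalgebra of $\mB$.

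The contrapositive reads: if $\mA$ is \emph{not} a closed subalgebra of $\mB$ with equivalent norms, then the inequality $(1-C)\|b\|_\mA \leq C\|b\|_\mB$ cannot force norm equivalence, which is only possible when $1 - C \leq 0$, i.e., $C \geq 1$. This yields the dichotomy asserted in the lemma. There is no real obstacle to speak of; the whole argument is a single substitution plus an elementary rearrangement, and the only subtlety is the (essentially automatic) continuity of the inclusion $\mA \hookrightarrow \mB$ needed for the upper bound $\|b\|_\mB \lesssim \|b\|_\mA$.
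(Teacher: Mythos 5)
Your proof is correct and uses essentially the same approach as the paper: plug the unit into the differential norm inequality, rearrange, and observe that the resulting one-sided comparison between $\norm{\cdot}_\mA$ and $\norm{\cdot}_\mB$ forces $C\ge 1$ unless the norms are equivalent. The paper phrases the endgame slightly differently—dividing by $\norm{a}_\mA$ and taking the infimum of the ratio $\norm{a}_\mB/\norm{a}_\mA$—but this is the same rearrangement you made, so the two arguments are interchangeable.
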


\begin{proof}
  Equation~\eqref{eq:diffsn} with $b=e$ says that
$$
\norm{ae}_\mA \leq C(\norm{a}_\mA \norm{e}_\mB +\norm{e}_\mA
\norm{a}_\mB) = C(\norm{a}_\mA  + \norm{a}_\mB)   \, .
$$
Consequently, 
$$
\frac{\norm{a}_\mA}{\norm{a}_\mA}= 1 \leq C \Big( 1+
\frac{\norm{a}_\mB}{\norm{a}_\mA}\Big) \, .
$$
If $\norm{\cdot}_\mA $ is not equivalent to $\norm{\cdot }_\mB$, then
$\inf _{a\in \mA } \frac{\norm{a}_\mB}{\norm{a}_\mA} = 0$ and thus
\begin{equation*}
1\leq \inf C\Big( 1+   \frac{\norm{a}_\mB}{\norm{a}_\mA} \Big) = C  \, . %{\qedhere}
\end{equation*}
\end{proof}
 In most applications the ambient algebra $\mB $ is a
 $C^*$-algebra. In this case the constant for the embedding  $\mA \subseteq \mB
 $ is also $1$, i.e., $\norm{a}_\mB \leq \norm{a}_{\mA }$ for all
 $a\in \mA $. 

%  Originally
% only for dense subalgebras of a $C^*$-algebra):

As a warm-up we repeat Brandenburg's trick which shows  that a differential
subalgebra is \IC . For this  property we only need   % will admit more
                                % generality and assume 
 that $\mB $ is a symmetric Banach algebra. This means that $\sigma
 _{\mB } (a^*a) \subseteq [0,\infty )$ for all $a\in \mB $.

\begin{lem}[\cite{BC91,bra75,KS94,GK10}] \label{brandlem}
  If $\mB $ is a symmetric unital  Banach algebra and  $\mA \subseteq \mB $ is
  a differential subalgebra with the same unit, then $\mA $ is \IC\ in $\mB $. 
\end{lem}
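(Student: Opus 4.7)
My plan is to carry out Brandenburg's trick, which reduces inverse-closedness to a spectral radius equality obtained by iterating the Leibniz-type inequality \eqref{eq:diffsn}.

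\medskip

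\noindent\textbf{Step 1: Reduction to a positive element.} Given $a \in \mA$ with $a^{-1} \in \mB$, I pass to $b = a^*a \in \mA$. Since $\mB$ is symmetric, $\sigma_\mB(b) \subseteq [0,\infty)$, and since $b^{-1} = a^{-1}(a^*)^{-1} \in \mB$, the spectrum is bounded away from $0$: $\sigma_\mB(b) \subseteq [\varepsilon, M]$ for some $0 < \varepsilon \le M$. If I can show $b^{-1} \in \mA$, then $a^{-1} = b^{-1} a^* \in \mA$, so it suffices to treat the case of a positive invertible element. (This step uses that $\mA$ is $*$-invariant, which is the standing assumption in this framework.)

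\medskip

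\noindent\textbf{Step 2: Spectral radius equality.} This is the heart of the argument and the step I expect to be the main obstacle, because it requires the precise iteration of \eqref{eq:diffsn}. Applying the differential norm inequality to $c = d = b^n$ yields
\begin{equation*}
\norm{b^{2n}}_\mA \le 2C\, \norm{b^n}_\mA \norm{b^n}_\mB .
\end{equation*}
Taking the $(2n)$-th root and letting $n \to \infty$, the factor $(2C)^{1/(2n)}$ tends to $1$ and $\norm{b^n}_\mA^{1/(2n)} = (\norm{b^n}_\mA^{1/n})^{1/2} \to r_\mA(b)^{1/2}$, while $\norm{b^n}_\mB^{1/(2n)} \to r_\mB(b)^{1/2}$ by Gelfand's formula. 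Hence
\begin{equation*}
r_\mA(b) \le r_\mA(b)^{1/2}\, r_\mB(b)^{1/2},
\end{equation*}
which gives $r_\mA(b) \le r_\mB(b)$. Since the reverse inequality is automatic for $\mA \subseteq \mB$, I conclude $r_\mA(b) = r_\mB(b)$.

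\medskip

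\noindent\textbf{Step 3: Neumann series in $\mA$.} Choose any $t \in (0, 2/M)$ and set $u = e - tb \in \mA$. Then $\sigma_\mB(u) \subseteq [1 - tM,\, 1 - t\varepsilon] \subset (-1,1)$, so $r_\mB(u) < 1$. By Step 2 applied to the self-adjoint element $u$ (or, if one prefers, to $u^*u$, yielding the same conclusion), one has $r_\mA(u) = r_\mB(u) < 1$. Therefore $\sum_{n=0}^{\infty} u^n$ converges absolutely in $\mA$ to $(e-u)^{-1} = (tb)^{-1}$, so $b^{-1} = t \sum_n u^n \in \mA$. Combining with Step 1 gives $a^{-1} \in \mA$, completing the proof.
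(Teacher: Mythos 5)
Your proof is correct and its core is the same as the paper's: applying the Leibniz inequality \eqref{eq:diffsn} with both factors equal to $c^n$ to get $\norm{c^{2n}}_\mA \le 2C\,\norm{c^n}_\mA\norm{c^n}_\mB$, then extracting roots and passing to the limit to obtain $r_\mA(c) \le r_\mB(c)$, hence equality of spectral radii. Where you diverge is in the concluding step: the paper invokes Hulanicki's lemma \cite{hulanicki} as a black box to pass from spectral-radius equality to inverse-closedness, while you unpack precisely what that lemma does in this situation. Your reduction to the positive element $b=a^*a$, the observation that symmetry of $\mB$ pins $\sigma_\mB(b)$ into a compact interval $[\varepsilon,M]\subset(0,\infty)$, the choice of $t\in(0,2/M)$ so that $r_\mB(e-tb)<1$, and the Neumann series for $(tb)^{-1}$ converging in $\mA$ thanks to $r_\mA(e-tb)=r_\mB(e-tb)<1$ --- this is exactly the standard proof of the Hulanicki criterion restricted to the case at hand. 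So you have not found a genuinely different route; you have made the proof self-contained at the cost of a few extra lines, which is a reasonable trade-off (the cited version is terser, yours is more transparent for a reader unfamiliar with \cite{hulanicki}). Your parenthetical flag that $\mA$ must be $*$-invariant is well taken; that assumption is implicit in the lemma's statement and is also required by Hulanicki's lemma, so the paper uses it just as you do.
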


\begin{proof}
Applying (\ref{eq:diffsn}) with $a=b=c^n$ yields
\begin{equation} \label{eq:c3}
  \norm{c^{2n}}_\mA \leq 2C \norm{c^n}_\mA \norm{c^n}_\mB.
\end{equation}
Taking $n$-th roots and the limit $n \to \infty$ yields the inequality
of spectral radii $ \rho_\mA(c) \leq \rho_\mB(c)$ for all $c\in \mA $. Since the reverse
inequality is always true for $\mA \subseteq \mB$, we obtain the
equality of spectral radii. By a Lemma of Hulanicki~\cite{hulanicki} $\mA $ is inverse-closed in $\mB $.
  \end{proof}

The main step \eqref{eq:c3} above can be shaped into a quantitative
statement. We obtain the  result  that a  differential subalgebra
admits \nci . This is rather remarkable, because the condition of a
differential semi-norm is an extremely weak assumption.

\begin{thm}\label{thm-diffsubalg-nci}
  If  \mA\ is a differential $*$-subalgebra of a $C^*$-algebra
  \mB ,  then \mA\ admits \nci\ in \mB. For the norm of the inverse in
  $\mA $ we obtain  
\begin{equation}\label{eq:ncicstar}
  %\label{eq:13}
  \norm{a \inv}_{\mA} \leq \frac{\norm{a}_\mA} {\norm{a}_{\mB}^2} %\times 
\, \prod _{k=0}^\infty \Big( 1+  2 \frac{\norm{a}_\mA ^2}{\norm{a}_\mB
^2 } (2C)^k \,   \Big( 1- \frac{1}{\norm{a  }_{\mB }^2\norm{a \inv }_{\mB }^2}\Big) ^{2^k-1} \Big) 
\end{equation}
where  $C$ is the constant of (\ref{eq:diffsn}).
\end{thm}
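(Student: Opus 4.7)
The plan is to make Brandenburg's trick (Lemma~\ref{brandlem}) quantitative by tracking the constants through a dyadic telescoping representation of the inverse. First I would exploit that $\mB$ is a $C^*$-algebra to manufacture a nice self-adjoint contraction from $a$: setting $c := 1 - \norm{a}_\mB^{-2}\, a^*a \in \mA$, the element $a^*a$ is positive with spectrum in $[\norm{a\inv}_\mB^{-2},\ \norm{a}_\mB^2]$, so $c=c^*$ has spectrum in $[0,r]$ with
\[
r := 1 - \frac{1}{\norm{a}_\mB^2 \norm{a\inv}_\mB^2},
\]
and hence $\norm{c}_\mB = r<1$. The identity $a\inv = (a^*a)\inv a^* = \norm{a}_\mB^{-2}(1-c)\inv a^*$ then reduces the problem to estimating $\norm{(1-c)\inv}_\mA$, giving
\[
\norm{a\inv}_\mA \leq \frac{\norm{a}_\mA}{\norm{a}_\mB^2}\,\norm{(1-c)\inv}_\mA
\]
via submultiplicativity and $\norm{a^*}_\mA = \norm{a}_\mA$ (the $*$-subalgebra structure).

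To invert $1-c$ I would use the dyadic telescoping identity $(1-c)\prod_{k=0}^{n-1}(1+c^{2^k}) = 1 - c^{2^n}$. Since $c$ is self-adjoint in a $C^*$-algebra, $\norm{c^{2^n}}_\mB = r^{2^n} \to 0$, so in $\mB$ one obtains $(1-c)\inv = \prod_{k=0}^{\infty}(1+c^{2^k})$. The triangle inequality and submultiplicativity in $\mA$ then yield the formal bound
\[
\norm{(1-c)\inv}_\mA \leq \prod_{k=0}^{\infty} \bigl(1 + \norm{c^{2^k}}_\mA \bigr),
\]
which becomes the desired estimate once $\norm{c^{2^k}}_\mA$ is controlled and convergence is secured.

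The decisive quantitative input comes from iterating the differential-norm inequality \eqref{eq:c3}. With $u_k := \norm{c^{2^k}}_\mA$ and $\norm{c^{2^k}}_\mB = r^{2^k}$, the recursion $u_{k+1} \leq 2C\, r^{2^k}\, u_k$ telescopes to $u_k \leq (2C)^k r^{2^k-1}\, u_0$. For the base case I would combine the triangle inequality with submultiplicativity:
\[
u_0 = \norm{c}_\mA \leq 1 + \frac{\norm{a^*a}_\mA}{\norm{a}_\mB^2} \leq 1 + \frac{\norm{a}_\mA^2}{\norm{a}_\mB^2} \leq \frac{2 \norm{a}_\mA^2}{\norm{a}_\mB^2},
\]
where the final step uses $\norm{a}_\mB \leq \norm{a}_\mA$, a consequence of $\mB$ being a $C^*$-algebra. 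Substituting $u_k \leq 2(\norm{a}_\mA^2/\norm{a}_\mB^2)(2C)^k r^{2^k-1}$ into the product reproduces \eqref{eq:ncicstar} exactly.

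The main technical point I expect to require care is convergence of the infinite product in $\mA$ rather than only in $\mB$. The double-exponential factor $r^{2^k}$ crushes the geometric factor $(2C)^k$, so $\sum_k u_k < \infty$; a Cauchy estimate of the form $\norm{S_{n+m} - S_n}_\mA \leq \norm{S_n}_\mA \bigl(\prod_{k=n+1}^{n+m}(1+u_k) - 1\bigr)$ for the partial products $S_n = \prod_{k=0}^{n}(1+c^{2^k})$ then shows that $(S_n)$ is Cauchy in $\mA$. Its $\mA$-limit must coincide with the $\mB$-limit $(1-c)\inv$ by continuity of the embedding $\mA \hookrightarrow \mB$, placing $(1-c)\inv$ in $\mA$ with the claimed bound and transporting it to $a\inv$ through the opening display.
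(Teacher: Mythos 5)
Your argument is correct and reproduces the paper's proof in all its essentials: the reduction to inverting $b = a^*a/\norm{a^*a}_\mB$ via $a\inv = b\inv a^*/\norm{a^*a}_\mB$, the iterated differential-norm inequality on dyadic powers giving $\norm{c^{2^k}}_\mA \leq (2C)^k\norm{c}_\mB^{2^k-1}\norm{c}_\mA$, and the base estimates $\norm{c}_\mA \leq 2\norm{a}_\mA^2/\norm{a}_\mB^2$ and $\norm{c}_\mB = 1 - 1/(\norm{a}_\mB^2\norm{a\inv}_\mB^2)$. The only genuine difference is how the infinite product arises: you invoke the telescoping identity $(1-c)\prod_{k=0}^{n-1}(1+c^{2^k}) = 1-c^{2^n}$ and estimate the product directly, whereas the paper expands $(1-c)\inv = \sum_{n\geq 0} c^n$, bounds each $\norm{c^n}_\mA$ using the dyadic expansion of $n$ and the submultiplicativity of $\beta_n = \norm{c^n}_\mA\norm{c}_\mB^{-n}$, and only then regroups the sum over all $n$ into the same product over dyadic digit sequences. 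Your shortcut is a little more direct and makes the convergence in $\mA$ transparent via the Cauchy estimate on partial products, but the two computations are just different orderings of the same expansion and land on the identical bound~\eqref{eq:ncicstar}.
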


\begin{proof} Equation (\ref{eq:diffsn}) implies that, for arbitrary $c\in \mA $ and
$k\in\bN$,  
% \begin{equation}
% \norm{c^2}_\mA \leq 2C \norm{c}_\mA \norm{c}_\mB \,.\label{eq:compcomp}
% \end{equation}
% Iterating, 
we obtain
\begin{equation}
  \label{eq:itdsn}
  \norm{c^{2^k}}_\mA \leq 2C \norm{ c^{2^{k-1}}}_\mA \norm{
    c^{2^{k-1}}}_\mB \leq  2C \norm{c^{2^k-1} }_\mA \, \norm{c}_\mB
  ^{2^k-1} \,.
\end{equation}
  If we set 
\[
\beta_n =\beta_n(c)= \norm{c^n}_{\mA}\norm{c}_\mB^{-n} \,,
\]
then $\beta_n$ is \smult, and  (\ref{eq:itdsn}) says that $\beta
_{2^k} \leq 2C \beta _{2^{k-1}}$. Induction on $k$  implies that
\begin{equation}
  \label{eq:11}
  \beta_{2^k} \leq (2C)^k\beta_1 \,.
\end{equation}
Writing this inequality  as an inequality for the norms
of $c^{2^k}$, we obtain that
\begin{equation}
  \label{eq:cd1}
  \norm{ c^{2^k}}_\mA \leq \frac{\norm{c}_\mA}{\norm{c}_\mB}(2C)^k \,
  \norm{c}_\mB ^{2^k} \, . 
\end{equation}

Now let  $n=\sum_{k=0}^\infty \epsilon _k2^k$ be the
dyadic expansion of $n$   with digits   $\epsilon _k \in
\set{0,1}$. Let $\mF$ be the set of all sequences $\epsilon = (\epsilon _k) \in
\set{0,1}^\bN$ that contain only  finitely many $1$'s, then the dyadic
expansion is a bijection from $\bN $ onto
$\mF $. We now use the  submultiplicativity of $\beta _n$ and this
bijection to derive an estimate of $\norm{c^n}_\mA $ for arbitrary $n$, namely  
\begin{align}
    \norm{c^n}_{\mA } & = \norm{ \prod _{k=0}^\infty c^{2^k\epsilon
        _k}}_\mA \leq \prod _{k=0}^\infty \norm{c^{2^k}}_\mA
    ^{\epsilon _k}   \notag \\
&\leq  \prod _{k=0}^\infty \Big( \frac{\norm{c}_\mA}{\norm{c}_\mB} (2C)^k \,
  \norm{c}_{\mB } ^{2^k} \Big)^{\epsilon_k } \, . \label{eq:c4}
\end{align}
By summing up, we obtain  
\begin{align}
  \sum _{n=0}^\infty \norm{c^n}_\mA  &\leq \sum _{\epsilon \in \mF } \prod _{k=0}^\infty \Big( \frac{\norm{c}_\mA}{\norm{c}_\mB} (2C)^k \,
  \norm{c}_{\mB } ^{2^k} \Big)^{\epsilon_k } \notag \\
&= \prod _{k=0}^\infty \Big( 1+  \frac{\norm{c}_\mA}{\norm{c}_\mB} (2C)^k \,
  \norm{c}_{\mB } ^{2^k} \Big) \, .   \label{eq:cd2}
\end{align}
We note that this product converges, \fif\ 
$$
\sum _{k=0}^\infty (2C)^k \,
  \norm{c}_{\mB } ^{2^k} <\infty  \, ,
$$
which is the case, \fif\ $\norm{c}_\mB <1$. 
  
Now assume  that  $a \in \mA$ is invertible in \mB\  and set $b=
\frac{a^* a}{\norm{a^* a}_\mB}$. Then $b$ is hermitean,  invertible,
and $\norm{b}_\mB = 1$ and the spectrum $\sigma _\mB (b) $ is
contained in $
(0,1]$. Consequently, the spectrum of $c= e-b$ is
contained in an interval $\sigma _\mB (e-b) \subseteq [0,1-\epsilon ]
\subseteq [0,1)$, and in particular, $\norm{e-b}_{\mB } =
1-\epsilon < 1$. This implies that 
\[
b\inv = \sum_{k=0}^\infty (e-b)^k = \sum _{k=0}^\infty c^k 
\]
with convergence  in \mB . %  The element $e-b$ is hermitian and $\rho_\mB(e-b) \leq 1-u <1$.
% By an argument of Brandenburg~\cite{Brandenburg75}, the spectral radius $\rho_\mA(a)=\rho_\mB(a)$ for all $a \in \mA$, so
%  there is an index $k_0$ such that $\norm{(e-b)^{k_0}}_\mA^{1/k_0} <1$.
% A reshuffling argument proves that the series above converges in \mA .

Consequently, the inverse of $a$ is given by 
\[
a \inv = \frac{ b \inv a^*}{\norm{a^* a}_\mB} \,.
\]
We now apply the estimate~\eqref{eq:cd2} to the element $c=e-b$ and obtain a
first estimate for the norm of $a\inv $ in \mA .
\begin{align*}\label{eq:ncisymmdiffalg}
 \norm{a \inv}_{\mA}  &\leq \frac{\norm{a^*}_\mA}{\norm{a^*a}_{\mB}}
 \Big(  \sum _{n=0}^\infty \norm{c^n}_{\mA } \Big) \\
& \leq \frac{\norm{a^*}_\mA}{\norm{a^*a}_{\mB}} %\times 
\prod _{k=0}^\infty \Big( 1+  \frac{\norm{c}_\mA}{\norm{c}_\mB} (2C)^k \,
  \norm{c}_{\mB } ^{2^k} \Big)      \, .
\end{align*}
Finally we  estimate the norm of $c= e-\frac{a^* a}{\norm{a^* a}_\mB} $ directly by the
norms of $a$ and $a\inv $. %In this case we set  $b=\frac{a^* a}{\norm{a^* a}_\mB}$, and we obtain
Clearly
\begin{equation}
  \label{eq:norma}
  \norm{e-b}_\mA \leq \norm{e}_\mA + \norm{b}_\mA \leq 1 +
\frac{\norm{a^*a}_\mA}{\norm{a^*a}_\mB} \leq 1+ \frac{\norm{a}_\mA
  ^2}{\norm{a}_\mB^2} \leq 2  \frac{\norm{a}_\mA
  ^2}{\norm{a}_\mB^2} \, .
\end{equation}
On the other hand, since $\mB $ is a $C^*$-algebra  and $a^*a$ is
positive, we have $\norm{ (a^*a)\inv }_\mB ^{-1}  = \min \{ \lambda
:\lambda \in \sigma (a^*a)\}   = \lambda _{\min }$. Consequently, 
\begin{equation}
  \label{eq:normb}
  \begin{split}
  \norm{e-b}_\mB &= 1-\frac{\lambda _{\min} }{ \norm{a^*a}_\mB} \\
&=  1-\frac{1}{\norm{ (a^*a)\inv }_\mB \norm{a^* a}_\mB} =1-
\frac{1}{\norm{ a\inv }_\mB ^2 \norm{ a}_\mB^2} \, . %(\cond_\mB(a))^{-1} <1 \,, 
\end{split}
\end{equation}
Inserting these estimates in~\eqref{eq:cd2}, we obtain the norm control
stated in~\eqref{eq:ncicstar}. 
% and, trivially,
% \begin{equation}
%   \label{eq:norma}
%   \norm{e-b}_\mA =\frac{\norm{ \norm{a^* a}_\mB e - a^* a}_\mA}{\norm{a^* a}_\mB} \leq \frac{ \norm{a^* a}_\mB  + \norm{a^* a}_\mA}{\norm{a}_\mB^2} \leq 2 \frac{\norm{a}_\mA^2}{\norm{a}_\mB^2} \,,
% \end{equation}
% assuming that $\norm{a}_\mB \leq \norm{a}_\mA$ (which is always true, if \mA\ is involutive ***). Inserting (\ref{eq:normb}) and (\ref{eq:norma}) into ***the analogue of ***(\ref{eq:ncisymmdiffalg}) we obtain (\ref{eq:ncicstar}).
\end{proof}
\begin{rems}
1. The proof is a modification of an approach of Sun~\cite{sun05}, the
important  estimate \eqref{eq:11} was   derived earlier  by
Blackadar and Cuntz~\cite[Lemma 3.8]{BC91} and Kissin and
Shulman~\cite{KS94}. Our main  contribution is the 
representation of the geometric series in~\eqref{eq:cd2}  as an infinite product, which
is amenable to a reasonable asymptotic analysis. 

2.   As $\norm{a^* a}_\mA \leq 2C \norm{a}_\mA \norm{ a}_\mB $, we can
replace the inequality (\ref{eq:norma}) by 
\[
\norm{e-b}_\mA \leq (1+2C)\frac{\norm{a}_\mA}{\norm{a}_\mB}
\]
If $\norm{a}_\mA / \norm{a}_\mB \geq 1/2 + C$, one obtains a slightly
better estimate in \eqref{eq:ncicstar}. 
\end{rems}
 
\subsection{Application: General Approximation Spaces}

In approximation theory differential (semi-) norms appear naturally in the
definition of approximation spaces~\cite{DL93}. 

 An \emph{approximation scheme} on the \BA\ \mA\
is a family $(X_n)_{n \in \bn_0}$ of closed subspaces of \mA\ that satisfy
$ X_0=\set{0}$, $ X_n \subseteq X_m$  for $n \leq m$,  and
$  X_n \cdot X_m\subseteq X_{n+m}$, $n,m\in \bn_0$.
If \mA\ is a $*$-algebra, we  assume that
$ e \in X_1$  and $ X_n=X^*_n$ for all $ n \in \bn_0$.
The \emph{$n$-th approximation error} of  $a \in \mA$ by $X_n$ is
$  E_n(a)=\inf_{x \in X_n} \norm{a-x}_\mA$.
For  $1\leq p < \infty $ and $w$ a weight on $\bno$ the approximation space $\app p w  \mA$  consists of all $a \in \mA$ for which the norm 
\begin{equation}
  \label{eq:appspace}
  \norm{a}_{\mE_w^p}= \bigl(\sum_{k=0}^\infty {E_k(a)^p}w(k)^{p} \bigr)^{1/p}
 \end{equation}
is finite (with the standard modification for $p=\infty$).

Algebra properties of \as s are discussed
in~\cite{AL06,GK10}. The invertibility of elements in an
approximation space was investigated in~\cite{GK10}. In particular,
we proved  the
following result.

\begin{prop} \label{appspIC} If \mA\ is a symmetric \BA\ with
  approximation scheme $(X_n)_{ n \in \bn_0}$ and $w$ is a subadditive
  weight function on $\bN_0 $, then $\mE_w^p(\mA)$ is \IC\ in $\mA$.
\end{prop}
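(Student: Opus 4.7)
My plan is to verify that $\mE_w^p(\mA)$ is a differential subalgebra of $\mA$ in the sense of \eqref{eq:diffsn} and then invoke Lemma~\ref{brandlem}. Since $\mA$ is a symmetric unital Banach algebra, the differential norm property will automatically yield inverse-closedness via Brandenburg's trick.

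The key ingredient is a Jackson-type submultiplicative estimate on the approximation error. Fix $a,b \in \mE_w^p(\mA)$ and $n \in \bno$. Using the splitting $n = \lfloor n/2 \rfloor + \lceil n/2 \rceil$, I would choose near-best approximants $x \in X_{\lfloor n/2 \rfloor}$ to $a$ with $\norm{a-x}_\mA \leq 2 E_{\lfloor n/2 \rfloor}(a)$ and $y \in X_{\lceil n/2 \rceil}$ to $b$ with $\norm{b-y}_\mA \leq 2 E_{\lceil n/2 \rceil}(b)$. By the multiplicative property $X_{\lfloor n/2 \rfloor} \cdot X_{\lceil n/2 \rceil} \subseteq X_n$ of the approximation scheme, the product $xy$ lies in $X_n$, and the decomposition $ab-xy = (a-x)y + x(b-y)$ gives
\begin{equation*}
E_n(ab) \leq \norm{a-x}_\mA \norm{y}_\mA + \norm{x}_\mA \norm{b-y}_\mA .
\end{equation*}
After bounding $\norm{y}_\mA \leq \norm{b}_\mA + \norm{b-y}_\mA$ and $\norm{x}_\mA \leq \norm{a}_\mA + \norm{a-x}_\mA$ and absorbing the second-order terms (which are dominated by a constant times the linear ones, since $E_k(\cdot)$ is bounded by $\norm{\cdot}_\mA$), this yields the fundamental estimate
\begin{equation*}
E_n(ab) \ls E_{\lfloor n/2 \rfloor}(a)\,\norm{b}_\mA + E_{\lceil n/2 \rceil}(b)\,\norm{a}_\mA .
\end{equation*}

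Next I would pass from this pointwise bound to the weighted $\ell^p$-norm. Subadditivity of $w$ gives $w(n) \leq w(\lfloor n/2 \rfloor) + w(\lceil n/2 \rceil) \leq 2 w(\lceil n/2 \rceil)$, and, more generally, $w(n) \ls w(k)$ whenever $k = \lfloor n/2 \rfloor$ since $n \leq 2k+1$. Applying Minkowski's inequality in $\ell^p(w)$ and re-indexing each sum by $k = \lfloor n/2 \rfloor$ or $k = \lceil n/2 \rceil$ (each such $k$ accounting for at most two values of $n$), the sums collapse to constant multiples of $\norm{a}_{\mE_w^p}$ and $\norm{b}_{\mE_w^p}$. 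This delivers the differential norm inequality
\begin{equation*}
\norm{ab}_{\mE_w^p} \leq C\bigl(\norm{a}_{\mE_w^p}\norm{b}_\mA + \norm{a}_\mA\,\norm{b}_{\mE_w^p}\bigr).
\end{equation*}

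The main obstacle is essentially combinatorial bookkeeping: matching the truncation indices $\lfloor n/2 \rfloor$ and $\lceil n/2 \rceil$ on both sides and using subadditivity of $w$ to compare $w(n)$ with $w(\lfloor n/2 \rfloor)$ while keeping track of the multiplicities after re-indexing. The $*$-condition $X_n = X_n^*$ ensures that $\mE_w^p(\mA)$ is closed under involution. Once the differential norm estimate is established, $\mE_w^p(\mA)$ is a differential $*$-subalgebra of the symmetric unital Banach algebra $\mA$, and Lemma~\ref{brandlem} directly yields inverse-closedness.
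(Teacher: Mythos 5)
Your proof is correct and follows the same strategy as the paper: establish the differential norm inequality for $\mE_w^p(\mA)$ via the dyadic splitting $n=\lfloor n/2\rfloor+\lceil n/2\rceil$ together with $X_{\lfloor n/2\rfloor}\cdot X_{\lceil n/2\rceil}\subseteq X_n$ and subadditivity of $w$, then invoke Brandenburg's trick (Lemma~\ref{brandlem}) for the symmetric ambient algebra $\mA$. The paper states the decisive differential-norm inequality \eqref{eq:10} and cites \cite{GK10} for its proof; your argument fills in that derivation with the same Jackson-type estimate and re-indexing, modulo minor tacit assumptions on $w$ (such as $w$ bounded away from $0$) needed to pass from $w(n)$ to $w(\lfloor n/2\rfloor)$.
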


The decisive inequality in the proof of this proposition was 
\begin{equation}
  \label{eq:10}
  \norm{a b}_{\app p w \mA} \leq C_w (\norm{a}_\mA \norm{b}_{\app p r
    \mA}+\norm{b}_\mA \norm{a}_{\app p r \mA}) \, . 
\end{equation}
In other words, the norm of   $\app p w \mA$ is
a differential norm in  \mA. (In~\cite{GK10} Proposition~\ref{appspIC}
was stated only for polynomial weights $v(k) = k^r$, the statement and
proof is identical for general subadditive weights.) 

As a consequence of Theorem~\ref{thm-diffsubalg-nci} we obtain the following statement, which came as a
surprise to us.

\begin{cor}
  Assume that $\mA $ is a $C^*$-algebra with an approximation scheme
  $(X_n)_{n\in \bN _0}$ and that $w$ is a subadditive weight on $\bN_0
  $. Then the approximation algebra $\app p w \mA$ admits \nci . 
\end{cor}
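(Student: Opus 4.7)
The plan is to deduce the corollary as a direct application of Theorem~\ref{thm-diffsubalg-nci}, identifying the ambient $C^*$-algebra $\mA$ of the corollary with the $\mB$ of the theorem, and the approximation algebra $\app p w \mA$ with the differential subalgebra (played by $\mA$ in the theorem). The only work is to verify that $\app p w \mA$ really is a differential $*$-subalgebra of $\mA$ with common unit.

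First I would check that $\app p w \mA$ is a $*$-subalgebra of $\mA$ containing the unit $e$: the assumption $e \in X_1$ forces $E_n(e) = 0$ for all $n \geq 1$, so $\norm{e}_{\app p w \mA}$ is finite and $e \in \app p w \mA$; the axiom $X_n = X_n^*$ gives $E_n(a^*) = E_n(a)$, hence $*$-invariance of the approximation norm; and $X_n \cdot X_m \subseteq X_{n+m}$ together with subadditivity of $w$ gives the algebra property (this is the content used in proving Proposition~\ref{appspIC}). Next I would invoke \eqref{eq:10}, established in the proof of Proposition~\ref{appspIC} for general subadditive weights, which is exactly the differential norm inequality \eqref{eq:diffsn} for $\norm{\cdot}_{\app p w \mA}$ inside $\mA$ with structure constant $C_w$.

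With these hypotheses verified, Theorem~\ref{thm-diffsubalg-nci} applies verbatim and yields, for every $a \in \app p w \mA$ that is invertible in $\mA$, the explicit norm-control estimate \eqref{eq:ncicstar} with $C$ replaced by $C_w$, and in particular the existence of a control function $h$ as required by Definition~\ref{dc:norm}. There is no genuine obstacle here; the substance of the result lies entirely in the combination of Proposition~\ref{appspIC} (whose proof extracts the differential norm structure \eqref{eq:10}) with Theorem~\ref{thm-diffsubalg-nci} (which upgrades differential norm to quantitative norm control), and the corollary simply records this consequence.
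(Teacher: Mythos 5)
Your proposal is correct and follows essentially the same route the paper intends: you verify the $*$-subalgebra and common-unit hypotheses from the approximation-scheme axioms, extract the differential norm inequality \eqref{eq:10} from the proof of Proposition~\ref{appspIC} (valid for general subadditive weights, as the paper notes), and then apply Theorem~\ref{thm-diffsubalg-nci} directly. This is exactly the intended argument, which the paper records only by the phrase ``as a consequence of Theorem~\ref{thm-diffsubalg-nci}.''
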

%  So the norm of $\app p w \mA$ is a
% differential seminorm, and consequently \nci\ holds in \mA. **** 
\begin{rems}
(1)  In approximation theory one uses  the  polynomial weights
 $w(k)=v_r(k)= k^r$ for  $r>0$.  In the literature, the 
resulting approximation space  $\app p {v_{r-1/p}} \mA$ is usually
denoted as $\app p r \mA $, in contrast to our notation. 

(2) In many  situations more structure is available, e.g., a derivation
  or an  automorphism group on $\mB $,  and   the
  approximation space $\app p r \mA$ can be identified with a  Besov
  space or some  other space, see \cite{DL93,Klo12, Klo12a}.

In this case the estimates for
  norm control can be  improved  significantly, even without the
  assumption that the ambient algebra is a $C^*$-algebra. This  will be the
  topic of Part II~\cite{GK12b}. 
%   The statement of Proposition~\ref{appspIC} and its proof remain valid if the polynomial weight $v_r$ is replaced by a general \emph{subadditive weight}, i.e.,a weight $w$ that satisfies $w(k+l) \leq c_w (w(k)+ w(l))$ for all $k, l \in \bn$.
\end{rems}

%This expression grows super-algebraically in $\norm{a \inv}_\mB$.

\section{Asymptotics of norm-controlled inversion in differential subalgebras}
\label{sec:asympt}

In principle, the estimate in \eqref{eq:ncicstar}  yields a norm-controlling
funcion $h$, but the expression is too cumbersome to decode any useful
information. Our main effort is to  derive  a clean asymptotic expression
for the norm controlling function. The following theorem offers a  norm controlling function   
in which all constants are explicit and depend only on the structure
parameter of the differential norm.

\begin{thm} \label{thm:asymptotics}
  Assume that  \mB\ is a $C^*$-algebra and $\mA \subseteq \mB$ is a
  differential $*$-subalgebra with a common unit and a differential
  norm satisfying \eqref{eq:diffsn} with the structure constant $u=2C
  $. 

Set  
\begin{align*}
  K & = (\ln 2 - 1/2)\inv \approx 5.1774 \, , \\
\gamma _1   &= e \, \exp \Big( \frac{8 \ln u \ln ^2 (\ln u )}{\ln ^2
    2} \Big) \qquad \text{ and } \qquad 
\gamma _2 = \frac{16 \ln u}{\ln ^2 2}  \,,\\
\gamma _3 &= e \, \exp \Big( \frac{8\ln ^2 K}{\ln u} \Big) \qquad
\text{ and } \qquad 
\gamma _4 = \frac{4}{\ln u} \, . 
\end{align*}

If $\kappa  (a) \geq 5$, then % either 
%   \begin{equation}
%     \label{eq:16}
%     \norm{a \inv}_\mA \leq \gamma _1 \, \frac{\norm{a
%       }_{\mA}}{\norm{a}^2_{\mB}} \,\, e^{\gamma _2\ln ^2 \kappa (a)^2} \,
%       , 
%   \end{equation}
% or
% \begin{equation}
%   \label{eq:17c}
%     \norm{a \inv}_\mA \leq \gamma _3 \, \frac{\norm{a
%       }_{\mA}}{\norm{a}^2_{\mB}} \,\, e^{\gamma _4 \ln ^2 \Big( 2 u^{16}
%     \frac{\norm{a}_\mA ^2}{\norm{a}_\mB ^2}\Big)} \, 
%   \end{equation}
% whichever expression is larger.
  \begin{equation}
    \label{eq:16}
    \norm{a \inv}_\mA \leq \max \Big\{ \gamma _1 \, \frac{\norm{a
      }_{\mA}}{\norm{a}^2_{\mB}} \,\, e^{\gamma _2\ln ^2 \kappa
      (a)^2} ,  \,  \gamma _3 \, \frac{\norm{a
      }_{\mA}}{\norm{a}^2_{\mB}} \,\, e^{\gamma _4 \ln ^2 \Big( 2 u^{16}
    \frac{\norm{a}_\mA ^2}{\norm{a}_\mB ^2}\Big) } \Big\} \, . 
  \end{equation}
If $\kappa (a)^2 \geq (\ln u)\inv \big( 10 \norm{a}_\mA ^2
/\norm{a}_\mB ^2 \big)^{\ln 2/\ln u}$, then always
$$    \norm{a \inv}_\mA \leq  \gamma _1 \, \frac{\norm{a
      }_{\mA}}{\norm{a}^2_{\mB}} \,\, e^{\gamma _2\ln ^2 \kappa
      (a)^2} \, .
$$
\end{thm}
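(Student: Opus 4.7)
The plan is to extract asymptotic information from the infinite product already established in Theorem~\ref{thm-diffsubalg-nci}. Setting $u = 2C$, $x = \norm{a}_\mA^2/\norm{a}_\mB^2$, and $q = 1 - 1/\kappa(a)^2$, the target is to bound
$$P = \prod_{k=0}^\infty (1 + y_k), \qquad y_k = 2x\, u^k \, q^{2^k-1},$$
and the natural strategy is to take logarithms and split the sum at a carefully chosen cutoff index $N$.

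For the \emph{tail} $k > N$, I would choose $N$ so that $y_{N+1} \leq 1/2$. Once this holds, the ratio $y_{k+1}/y_k = u\, q^{2^k}$ shrinks double-exponentially (since $q < 1$), hence $\sum_{k > N} y_k$ is geometric and $\sum_{k > N} \ln(1+y_k) \leq \sum_{k > N} y_k = O(1)$. For the \emph{head} $k \leq N$, the bound $1 + y \leq 2\max(1,y)$ gives
$$\sum_{k=0}^{N} \ln(1+y_k) \;\leq\; (N+1)\ln 2 + \sum_{k=0}^N \big[\ln(2x) + k\ln u\big]_+ \;\leq\; (N+1)\ln(4x) + \tfrac{1}{2}\,(\ln u)\, N(N+1),$$
so the dominant contribution is $\tfrac12 (\ln u) N^2$. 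This $N^2$ is the source of the $\ln^2$ in the final estimate \eqref{eq:16}.

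The core of the proof is the choice of $N$, which must satisfy $2^{N+1}/\kappa^2 \gtrsim (N+1)\ln u + \ln(4x)$ so that the tail is controlled. Two natural regimes arise. If the $k\ln u$ term dominates the right-hand side, then $N$ is driven by $\kappa$: one takes $2^N \sim \kappa^2 \ln u$, yielding $N \asymp 2\log_2 \kappa$ and, after multiplication by an auxiliary constant $K = (\ln 2 - 1/2)^{-1}$ arising from the sharp form of the tail inequality $2^{N+1}/\kappa^2 \geq (N+1) \ln 2 + O(1)$, the head bound $(\ln u) N^2/2$ becomes $\gamma_2 \ln^2 \kappa^2$ with $\gamma_2 = 16 \ln u/\ln^2 2$; the double-log correction $\log_2 \log_2 u$ in the expansion of $N$ is absorbed into the prefactor $\gamma_1$. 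If instead the $\ln(4x)$ term dominates, $N$ is driven by $x$: one takes $N \asymp (\ln u)^{-1} \ln(2 u^{16} x)$, and the head bound becomes $\gamma_4 \ln^2(2 u^{16} x)$ with $\gamma_4 = 4/\ln u$, multiplied by the prefactor $\gamma_3$. Taking the maximum of the two bounds gives \eqref{eq:16}.

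The final assertion---that if $\kappa(a)^2 \geq (\ln u)^{-1}(10 x)^{\ln 2/\ln u}$ then the first branch of the max suffices---is a direct comparison of the exponents $\gamma_2 \ln^2 \kappa^2$ and $\gamma_4 \ln^2(2 u^{16} x)$ under this hypothesis; the inequality $(10x)^{\ln 2/\ln u} \leq \kappa^2 \ln u$ ensures that $\ln(2 u^{16} x)$ is dominated by a multiple of $\ln \kappa^2$, so the $\kappa$-branch is always the larger of the two. The hard part of the argument will be neither the tail estimate nor the head estimate taken individually but the bookkeeping needed to extract the \emph{explicit} constants $\gamma_1,\dots,\gamma_5$ and the auxiliary $K$: every choice of $N$ introduces lower-order $\log\log$ corrections, and these must be absorbed cleanly into the prefactors without contaminating the leading exponents. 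The constant $K = (\ln 2 - 1/2)^{-1}$ in particular encodes the minimal slack needed in the inequality $2^{N+1}/\kappa^2 \geq (N+1)\ln 2$, so its sharp value is the pivot on which the entire numerical conclusion rests.
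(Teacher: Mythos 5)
Your overall strategy matches the paper's: express the infinite product from Theorem~\ref{thm-diffsubalg-nci} as a function of the three parameters, split the product (equivalently, the log-sum) into a finite head and an infinite tail at a tuned cutoff, bound the tail by a universal constant, and bound the head by a power of the maximal factor, leading to the two-regime $\max$ and the $\ln^2$ growth. The identification of the two regimes (cutoff driven by $\kappa$ vs.\ driven by the embedding ratio) and the leading-order scaling are correct.

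Where you diverge is in the mechanics of the tail estimate and, as a consequence, in the provenance and value of the constant $K$. The paper bounds $\sum_{k>M} c u^k v^{2^k}$ by the integral $\int_M^\infty u^x v^{2^x}\,dx$, transforms it into the incomplete Gamma function $\Gamma(\log_2 u + 1,\, 2^M \ln v^{-1})$, and uses the asymptotic expansion of $\Gamma(a,x)$: the factor $1/\ln 2$ from the change of variables and the factor $(1-(\ln 4)^{-1})^{-1}$ from summing the asymptotic series combine to give exactly $K = \big(\ln 2\,(1-(\ln 4)^{-1})\big)^{-1} = (\ln 2 - 1/2)^{-1}$. Your attribution of $K$ to ``the sharp form of the tail inequality $2^{N+1}/\kappa^2 \geq (N+1)\ln 2 + O(1)$'' is not where that constant actually comes from, and your proposed elementary tail argument (choose $N$ so that $y_{N+1}\leq 1/2$, then appeal to super-geometric decay) would be perfectly sound but would yield a different prefactor constant in place of $K$. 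Likewise your head estimate uses $\sum_k[\ln(2x)+k\ln u]_+$ rather than the paper's $\big(1+cu^\xi\big)^{M+1}$ bound after locating the maximal factor (Lemma~\ref{maxlem}); this again changes the numerics but not the asymptotic form. Since the theorem you are asked to prove states \emph{specific} values for $\gamma_1,\dots,\gamma_4$ and for $K$, and your sketch explicitly defers the bookkeeping, the proposal as written does not establish the stated constants and would need to be reworked along the incomplete-Gamma route (or the constants re-derived and shown to match) to prove \eqref{eq:16} verbatim.
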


The following corollary provides a simpler description and an explicit
norm controlling function $h$ in the sense of Definition~\ref{dc:norm}.

\begin{cor}
If  $\mA $ is a
  differential $*$-subalgebra of a unital  $C^*$-algebra $\mB $ with a
  common unit, then there exist constants $C_1, C_2 > 0$   such that 
  $$
\norm{a\inv } _{\mA } \leq C_1 \norm{a}_{\mA } \norm{a\inv }_{\mB } ^2
\, e^{C_2 \ln ^2 ( \norm{a}_{\mA } \norm{a\inv }_{\mB } )} \, .
$$
Thus the controlling function may be taken to be 
$$
h(x,y) = C_1xy^2 e^{C_2 \ln^2 (xy)} \, .
$$
\end{cor}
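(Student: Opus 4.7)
The plan is to read off the corollary from Theorem~\ref{thm:asymptotics} in the regime $\kappa(a) \geq 5$, and to use Theorem~\ref{thm-diffsubalg-nci} directly in the remaining regime $\kappa(a) < 5$. Two elementary inequalities drive the translation into the desired form: since $\mB$ is a $C^*$-algebra with common unit, $\norm{a}_\mB \leq \norm{a}_\mA$, so $\norm{a}_\mA \norm{a\inv}_\mB \geq \norm{a}_\mB \norm{a\inv}_\mB = \kappa(a) \geq 1$; and $\norm{a}_\mB \geq 1/\norm{a\inv}_\mB$, so $\tfrac{\norm{a}_\mA}{\norm{a}_\mB^2} \leq \norm{a}_\mA \norm{a\inv}_\mB^2$. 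In particular $\ln(\norm{a}_\mA \norm{a\inv}_\mB) \geq 0$, which makes the exponential factor in the desired bound meaningful.

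For $\kappa(a) \geq 5$, I would treat each branch of the max in~\eqref{eq:16}. For the first branch, use $\ln^2 \kappa(a)^2 = 4 \ln^2 \kappa(a) \leq 4 \ln^2(\norm{a}_\mA \norm{a\inv}_\mB)$ together with $\tfrac{\norm{a}_\mA}{\norm{a}_\mB^2} \leq \norm{a}_\mA \norm{a\inv}_\mB^2$ to obtain
\[
\gamma_1 \tfrac{\norm{a}_\mA}{\norm{a}_\mB^2} e^{\gamma_2 \ln^2 \kappa(a)^2}
\;\leq\; \gamma_1 \norm{a}_\mA \norm{a\inv}_\mB^2 \, e^{4 \gamma_2 \ln^2(\norm{a}_\mA \norm{a\inv}_\mB)}.
\]
For the second branch, expand $\ln(2u^{16} \norm{a}_\mA^2/\norm{a}_\mB^2) \leq \ln(2u^{16}) + 2 \ln(\norm{a}_\mA \norm{a\inv}_\mB)$ and apply $(x+y)^2 \leq 2x^2 + 2y^2$ to get
\[
\gamma_3 \tfrac{\norm{a}_\mA}{\norm{a}_\mB^2} e^{\gamma_4 \ln^2(2u^{16} \norm{a}_\mA^2/\norm{a}_\mB^2)}
\;\leq\; \bigl(\gamma_3 e^{2 \gamma_4 \ln^2(2u^{16})}\bigr) \norm{a}_\mA \norm{a\inv}_\mB^2 \, e^{8 \gamma_4 \ln^2(\norm{a}_\mA \norm{a\inv}_\mB)}.
\]
Both prefactors depend only on $u = 2C$, so setting $C_1 = \max\{\gamma_1, \gamma_3 e^{2 \gamma_4 \ln^2(2u^{16})}\}$ and $C_2 = \max\{4\gamma_2, 8\gamma_4\}$ yields the target bound in this regime.

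The case $\kappa(a) < 5$ is not covered by Theorem~\ref{thm:asymptotics} and is where I expect the main technical nuisance. Here I would return to Theorem~\ref{thm-diffsubalg-nci} and bound the product in~\eqref{eq:ncicstar} directly. Write $\xi = \norm{a}_\mA/\norm{a}_\mB$ and $r = 1 - 1/\kappa(a)^2 \leq 24/25$, so each factor is $1 + 2 \xi^2 u^k r^{2^k-1}$. Let $N$ be the smallest integer with $2 \xi^2 u^N r^{2^N - 1} \leq 1$; solving yields $N = O(\log \log \xi)$ because $(2^k - 1)|\ln r|$ grows double-exponentially while the competing term grows only linearly in $k$. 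For $k \geq N$ the factors sum geometrically to $O(1)$ in $\log$; for $k < N$ one uses $\ln(1+x) \leq 1 + \ln x$ for $x \geq 1$ to get the crude estimate $\sum_{k<N}(1 + \ln(2\xi^2) + k \ln u) = O(N \ln \xi + N^2 \ln u)$, which is $o(\ln^2 \xi)$. Combined with $\tfrac{\norm{a}_\mA}{\norm{a}_\mB^2} \leq \norm{a}_\mA \norm{a\inv}_\mB^2$ and $\ln \xi \leq \ln(\norm{a}_\mA \norm{a\inv}_\mB)$, this produces a bound of the same form, possibly after enlarging $C_1, C_2$. Taking the maximum of the constants from the two regimes gives a single pair $(C_1, C_2)$ that works uniformly, proving the corollary.
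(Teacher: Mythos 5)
Your treatment of the regime $\kappa(a)\geq 5$ is exactly the paper's argument: use $\norm{a}_{\mB}\leq\norm{a}_{\mA}$ and $\norm{a}_{\mB}^{-1}\leq\norm{a\inv}_{\mB}$ to replace both $\kappa(a)$ and the embedding ratio $\norm{a}_{\mA}/\norm{a}_{\mB}$ by the single product $\norm{a}_{\mA}\norm{a\inv}_{\mB}$, and then absorb the resulting prefactors, which depend only on $u=2C$, into $C_1$ and $C_2$. Your branch-by-branch reduction, including $\ln^2\kappa(a)^2 = 4\ln^2\kappa(a)$ and the use of $(x+y)^2 \leq 2x^2 + 2y^2$ to split $\ln\bigl(2u^{16}\norm{a}_\mA^2/\norm{a}_\mB^2\bigr)$, is a correct unpacking of the one-line substitution the authors invoke.

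Where you diverge from the paper, and to your credit, is the case $\kappa(a)<5$. The paper's proof of the corollary reads as a pure substitution into Theorem~\ref{thm:asymptotics}, which is stated only for $\kappa(a)\geq 5$; the complementary regime is never addressed. Since the set of $a$ with $\kappa(a)<5$ is not bounded in either $\norm{a}_{\mA}$ or $\norm{a}_{\mA}/\norm{a}_{\mB}$, you cannot simply absorb it into a constant, and a direct estimate of the infinite product in~\eqref{eq:ncicstar} is genuinely needed. Your sketch for this case is broadly sound: with $r=1-\kappa(a)^{-2}\leq 24/25$ the factors $2\xi^2 u^k r^{2^k-1}$ drop below $1$ after $N=O(\log\log\xi)$ steps (since $|\ln r|$ is bounded below), the tail contributes $O(1)$, and the head contributes $O(N\ln\xi + N^2\ln u) = o(\ln^2\xi)$ using $\ln(1+x)\leq 1+\ln x$. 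Converted via $\ln\xi\leq\ln(\norm{a}_{\mA}\norm{a\inv}_{\mB})$ and $\norm{a}_{\mA}/\norm{a}_{\mB}^2\leq\norm{a}_{\mA}\norm{a\inv}_{\mB}^2$, this does land in the target form. The details (monotonicity of the factor sequence, the exact bound on $N$) could be tightened, but the idea is right, and you have correctly identified a gap that the paper elides. In short: same method as the paper where the paper gives a proof, plus a needed supplementary argument where it does not.
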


\begin{proof}
The norm control estimates in Theorem~\ref{thm:asymptotics} depends
on the three parameters $\norm{a}_{\mA }, \norm{a}_{\mB } $ and $ \norm{a\inv
}_{\mB }  $. We eliminate $\norm{a}_{\mB }$  by using the embedding
inequality $ \norm{a }_{\mB } \leq  \norm{a }_{\mA }$ and the Banach
algebra inequality $\kappa (a) =   \norm{a }_{\mB }   \norm{a\inv
}_{\mB } \geq 1$, whence $ \norm{a }_{\mB }\inv \leq  \norm{a\inv
}_{\mB }  $. Then both $\kappa (a) \leq   \norm{a }_{\mA }   \norm{a\inv
}_{\mB }$ and $ \frac{\norm{a}_\mA }{\norm{a}_\mB } \leq  \norm{a }_{\mA }   \norm{a\inv
}_{\mB }$. After adapting the constants in \eqref{eq:16}, %  and
% ~\eqref{eq:17c},
we obtain the stated inequality.  
\end{proof}

\begin{rem}
  The case distinction in  \eqref{eq:16} %and \eqref{eq:17c}
  reveals an important and completely 
  new aspect of \nci . The norm of $a\inv $ in the smaller algebra
  $\mA $ may depend on both  the condition number $\kappa 
  (a) $ in the larger algebra $\mB $ and on  the embedding
parameter  $\tfrac{\norm{a}_{\mA}}{\norm{a}_{\mB}}$, whichever number
  dominates. 

In the extreme case of a unitary element in $\mB $ we have $a\inv =
a^*$, so that $\kappa (a)  = 1$ and $\norm{a\inv }_{\mA} =
\norm{a}_\mA $ depends only on the embedding ratio
$\tfrac{\norm{a}_{\mA}}{\norm{a}_{\mB}}$. 

As a second example we look at the algebra $\mA =C^1(\bT ) $ of continuously
differentiable functions on the torus embedded in $\mB = C(\bT )$. Consider
the sequence 
$$
a_n(t) = 1 + \tfrac{1}{2} \cos 2\pi n t \, .
$$
Then $1/2 \leq a_n(t) \leq 3/2$ and $\kappa (a)  = 3$, whereas 
$$
\frac{\norm{a_n}_{C^1}}{\norm{a_n}_{C}} = \frac{3+2 \pi n}{3} \, .
$$
In this case the norm of $a_n\inv $ in $C^1$ is of order
$$
\norm{a_n\inv } _{C^1} \geq 2\pi n \, ,
$$
and is completely determined by the ratio
$\frac{\norm{a_n}_{C^1}}{\norm{a_n}_{C}}$ and not by the condition
number of $a_n$ in $C(\bT )$. 
\end{rem}

The remainder of this section is devoted to proving the asymptotics. 
In view of \eqref{eq:ncicstar} we introduce the function
\begin{equation}
  \label{eq:cd3}
  f(u,v,c) = \prod _{k=0}^\infty (1+cu^k v^{2^k}) \, .
\end{equation}
We are interested in the asymptotic behavior of $f$ as
$v$ tends to $1$ for fixed $u$ and  want to express the constants  in
terms of $u$ as explicitly as possible. Setting
$$
u= 2C, \quad v =  1- \frac{1}{\kappa (a)^2}, \quad \text{ and } c =    2 \frac{\norm{a}_\mA ^2}{\norm{a}_\mB
^2 }   \big( 1- \frac{1}{\norm{a  }_{\mB }^2 \norm{a \inv }_{\mB
}^2}\big) ^{-1}
$$
Theorem~\ref{thm-diffsubalg-nci} says that 
$$
  \norm{a \inv}_{\mA} \leq \frac{\norm{a}_\mA} {\norm{a}_{\mB}^2} %\times 
\, f\Big(u,v,c  \Big)  \, .
$$

We will prove   the following  estimate for $f(u,v,c)$.

\begin{prop} \label{asf}
  Assume that  $\ln u /\ln v\inv \geq 16$ and set $K= (\ln 2 - 1/2)\inv
  $. Then 
  \begin{equation}
    \label{eq:cd18}
    f(u,v,c) \leq 
    \begin{cases}
      e \,   \exp \big( \frac{8 \ln u \ln ^2 (\ln u)}{\ln
      ^2 2}\big) \, \exp \big( \frac{8 \ln u}{\ln ^2 2} \, \ln ^2
    \big(\frac{1}{\ln v\inv} \big)\big)\,  \quad & \text{ if } \ln u/\ln
    v\inv \geq (Kc)^{\ln 2/\ln u} \, \vspace{3 mm} \\
     e \exp \Big(  \frac{8\ln ^2 K}{\ln u} \Big) \, \exp \big( \frac{4}{\ln u} \, \ln ^2
   c\big) \, , \quad & \text{ if } \ln u/\ln v\inv < (Kc)^{\ln 2/\ln
     u} \, .
    \end{cases}
  \end{equation}
\end{prop}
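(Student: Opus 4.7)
The plan is to take logarithms and analyze $\ln f(u,v,c) = \sum_{k=0}^\infty \ln(1 + c u^k v^{2^k})$. Writing $\alpha = \ln u$ and $\beta = \ln v\inv$, the general summand is $\ln(1 + c \exp(k\alpha - 2^k \beta))$. As a function of $k$, the exponent $k\alpha - 2^k\beta$ is maximized near $k^\star \approx \log_2(\alpha/(\beta \ln 2))$ and then decays super-geometrically because $2^k\beta$ overtakes $k\alpha$. The hypothesis $\alpha/\beta \geq 16$ guarantees that $k^\star$ is bounded below by an absolute constant, allowing a clean split.

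First I would choose a threshold $k_0$, estimate the head $k < k_0$ by $\ln(1+x) \leq \ln 2 + \max(0, \ln x)$, and estimate the tail $k \geq k_0$ by $\ln(1+x) \leq x$. The head contributes at most
\begin{equation*}
k_0 (\ln 2 + \ln c) + \alpha \sum_{k=0}^{k_0-1} k \;\leq\; k_0(\ln 2 + \ln c) + \tfrac{\alpha k_0^2}{2}.
\end{equation*}
For the tail I would pick $k_0$ so that simultaneously $c u^{k_0} v^{2^{k_0}} \leq 1$ and $u v^{2^{k_0}} \leq 1/2$; the second inequality (which explains where $K = (\ln 2 - 1/2)\inv$ comes from, since it controls when $2^k \beta - \alpha \geq \ln 2$) forces consecutive tail terms to decrease by a factor $\leq 1/2$, so the tail sums to at most $2 c u^{k_0} v^{2^{k_0}} \leq 2$. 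This contribution is what the leading factor $e$ in \eqref{eq:cd18} absorbs.

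The two cases then correspond to two natural choices of $k_0$. In the regime $\alpha/\beta \geq (Kc)^{\ln 2/\alpha}$, the quadratic term $\alpha k_0^2/2$ dominates the head, and the optimal $k_0$ is of order $\log_2(\alpha/\beta)/\ln 2$, so that the head contribution becomes approximately $(\alpha/(2\ln^2 2))(\ln\alpha - \ln\beta)^2$. Expanding the square and using $(a-b)^2 \leq 2a^2 + 2b^2$ (with the factor of $2$ doubled to absorb integer-rounding of $k_0$) produces both the ``constant'' factor $\exp(8\ln u\ln^2(\ln u)/\ln^2 2)$ and the $v$-dependent factor $\exp(8\ln u \ln^2(1/\ln v\inv)/\ln^2 2)$. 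In the complementary regime, $\ln c$ is large enough that $k_0 \ln c$ is the dominant head term, and I would instead pick $k_0$ so that $2^{k_0} \beta \approx \ln c$, i.e., $k_0 \approx \log_2(\ln c/\beta)$; plugging into $k_0 \ln c$ and simplifying with the regime condition reversed yields the second bound with $\ln^2 c /\ln u$ and constant $\exp(8\ln^2 K/\ln u)$.

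The main obstacle will be the precise bookkeeping of constants: justifying the rounding of $k_0$ to an integer without losing more than a factor of $2$ in the quadratic exponent, verifying that both conditions on $k_0$ (size of $cu^{k_0}v^{2^{k_0}}$ and size of $uv^{2^{k_0}}$) can be met simultaneously under the hypothesis $\alpha/\beta \geq 16$, and checking that the threshold condition $\ln u /\ln v\inv \geq (Kc)^{\ln 2/\ln u}$ in \eqref{eq:cd18} is exactly the crossover at which $\alpha k_0^2/2$ overtakes $k_0 \ln c$. Once these are pinned down, the stated estimate follows by exponentiation.
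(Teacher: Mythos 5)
Your plan is sound in outline and takes a genuinely different route from the paper's. For the tail the paper bounds $\prod_{k>M}(1+c_k) \leq \exp\big(\sum_{k>M} c_k\big)$, majorizes the sum by the integral $\int u^x v^{2^x}\,dx$, substitutes $y = 2^x \ln v\inv$, and invokes the asymptotic expansion of the incomplete Gamma function $\Gamma(a,x)$; your observation that once $u v^{2^{k_0}} \leq 1/2$ the ratio $c_{k+1}/c_k \leq 1/2$ gives an elementary geometric-series estimate that bypasses the special-function analysis entirely. For the head the paper repeats the maximal factor, $\prod_{k\leq M}(1+c_k) \leq (1+cu^\xi)^{M+1}$, whereas your direct log-sum bound $k_0(\ln 2 + \ln c) + \tfrac{1}{2}\alpha k_0^2$ keeps the linear growth in $k$ and is in principle tighter. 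Both modifications buy cleanliness and potentially sharper constants.

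There is a genuine gap in your Case~2, though. Choosing $k_0$ so that $2^{k_0}\beta \approx \ln c$ makes $c u^{k_0} v^{2^{k_0}} \approx u^{k_0}$, which is much larger than $1$ rather than $\leq 1$; the tail condition fails at that $k_0$. You actually need $2^{k_0}\beta \geq \ln c + k_0\alpha$, which pushes $k_0$ up, and the corrected $k_0$ must then be bounded using the regime condition $\xi < \ln(Kc)/\ln u$ before the head term $k_0 \ln c$ collapses to order $\ln^2 c /\ln u$. The claim that this follows by ``simplifying with the regime condition reversed'' elides the real work: $\log_2(\ln c/\beta)\cdot \ln c$ is not of order $\ln^2 c/\ln u$ until the regime condition is used to eliminate $\beta$, and a residual $\ln c\,\log_2 \ln c$ term must be absorbed into the $\ln^2 c/\ln u$ term via the same condition. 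Finally, your tail bound $\sum_{k\geq k_0} c_k \leq 2$ gives a tail-product factor of $e^2$ rather than the $e$ appearing in \eqref{eq:cd18}, so the leading constant in your version would not match the Proposition as stated; this is harmless but needs to be reconciled.
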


\vs

\begin{proof}[Proof of Theorem~\ref{thm:asymptotics}] Our main theorem
    follows from Proposition~\ref{asf}
by substitution. We keep in mind that 
 $u = 2C \geq 2$ is the structure
constant of the differential algebra and $v= 1- \kappa (a)^{-2}< 1$.
In particular  the  principal parameter $(\ln v\inv )\inv $
satisfies 
\begin{equation}
  \label{eq:cd13}
  \frac{1}{\ln v\inv} =  -  \frac{1}{\ln (1- \kappa (a) ^{-2})} \leq
  \kappa (a) ^2 \, , \text{ for all } \kappa (a) \geq 1 \, ,  
\end{equation}
and 
$
\lim_{ \kappa (a) \to \infty} -{\ln (1- \kappa (a) ^{-2})} \kappa (a) ^2 =1
$. 
The restriction $\kappa  (a) \geq 5$ comes from the assumption $\ln
u/\ln v\inv \geq 16$ in Proposition~\ref{asf}. Solving for $\kappa (a)$ yields
$$
\kappa  (a)  \geq \big( 1- u^{-1/16}\big)^{-1/2} \approx 4.857 \, ,
$$
and since $u = 2C \geq 2$, we may take $\kappa (a) \geq 5  >
(1-2^{-1/16})^{-1/2}$. 

Note that 
$$c =  2\frac{\norm{a}_\mA ^2}{\norm{a}_{\mB }^2}
\frac{1}{1- \kappa (a)^{-2}}  \geq 1 \, ,
$$
 depends on $a$, but not on the index $k$ in  the infinite
product~\eqref{eq:cd3}.  

The restriction for the special case follows from  making the condition
$\ln u/\ln
    v\inv \geq (Kc)^{\ln 2/\ln u}$ in \eqref{eq:cd18} explicit. We
    have 
$$
\kappa (a) ^2 \ln u \geq \frac{\ln u}{\ln v\inv } \geq \Big(2 K \frac{\norm{a}_\mA ^2}{\norm{a}_{\mB }^2}
\frac{1}{1- \kappa (a)^{-2}}\Big)^{\ln 2/\ln u} \geq \Big(10
\frac{\norm{a}_\mA ^2}{\norm{a}_{\mB }^2} \Big)^{\ln 2/\ln u} \, .
$$
\end{proof}

We carry out the proof of Proposition~\ref{asf} in several steps with intermediate lemmas. 
\vs

\textbf{Step~1.} \emph{Determine the maximum of $1+cu^k v^{2^k}$.}

 The asymptotics of $f(u,v,c)$ for $v\to 1$ will depend on the value of
 $u$. In the following we will often need the logarithmic ratio of $u$
 and $v$, and so we define the number $\xi $ by 
 \begin{equation}
   \label{eq:cd4}
   2^\xi = \frac{\ln u}{\ln v\inv}\quad \text{ and } \xi = \log _2
   \Big( \frac{\ln u}{\ln v\inv} \Big) \, ,
 \end{equation}
where $\log _2 z = \ln z/\ln 2 $ is the logarithm with basis $2$. We
will need $\xi \geq 4$ and thus $2^\xi \geq 16$ in the proof. 
Note that since $u= 2C \geq 2$ and $v<1$, $\xi \geq 4 $ is
well-defined and that 
$\lim _{v\to 1-} \xi = \infty $. 
Furthermore
\begin{equation}
  \label{eq:cd6}
v^{2^\xi } =  \exp \Big( \ln v \frac{\ln
  u}{\ln v\inv} \Big) = u\inv \, .  
\end{equation}

\begin{lem} \label{maxlem}
  If $u\geq 1$ and $v<1$, then 
  \begin{equation}
    \label{eq:cd5}
    1+cu^k v^{2^k} \leq 1+cu^\xi  \qquad \forall k\in \bN \, .
  \end{equation}
\end{lem}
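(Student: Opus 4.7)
The plan is to prove the slightly stronger real-variable statement that $g(k) := u^k v^{2^k} \leq u^\xi$ for every $k \geq 0$; the lemma then follows immediately by multiplying by $c$, adding $1$, and restricting $k$ to $\mathbb{N}$.

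I would split the argument into two cases according to whether $k$ lies below or above the transition point $\xi$. If $k \leq \xi$, then $v < 1$ forces $v^{2^k} \leq 1$, and combined with $u \geq 1$ this gives $u^k v^{2^k} \leq u^k \leq u^\xi$ directly, with no use of the specific definition of $\xi$.

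If $k > \xi$, the decisive ingredient is the identity $v^{2^\xi} = u^{-1}$ already recorded in \eqref{eq:cd6}. Iterating squares, $v^{2^k} = (v^{2^\xi})^{2^{k-\xi}} = u^{-2^{k-\xi}}$, so $u^k v^{2^k} = u^{k - 2^{k-\xi}}$. Since $u \geq 1$, the desired bound $u^k v^{2^k} \leq u^\xi$ is equivalent, upon setting $j := k - \xi > 0$, to the elementary inequality $j \leq 2^j$. This last is standard: $f(j) = 2^j - j$ satisfies $f(0) = 1 > 0$, and its unique critical point $j^\ast = \log_2(1/\ln 2)$ gives $f(j^\ast) = 1/\ln 2 + \log_2(\ln 2) > 0$, so $f \geq 0$ on all of $[0,\infty)$.

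There is essentially no obstacle here; the lemma is a one-line calculation once one recognizes that $v^{2^\xi} = u^{-1}$ is the right identity to invoke. Worth noting in passing is that $u^\xi$ is not sharp—the actual maximum of $g$ over real $k$ is attained at $k^\ast = \xi - \log_2(\ln 2)$, where $g(k^\ast) = u^{\xi - 1/\ln 2 - \log_2(\ln 2)}$—but the looser bound $u^\xi$ is the clean expression that will telescope conveniently through the subsequent estimates for the infinite product $f(u,v,c)$ in Proposition~\ref{asf}.
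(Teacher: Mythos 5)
Your proof is correct and takes a mildly different route from the paper's. The paper proceeds by a ratio test: it computes $c_{k+1}/c_k = u v^{2^k}$, deduces that the sequence $c_k = c u^k v^{2^k}$ increases up to $k = \lceil\xi\rceil$ and decreases afterward, and then bounds the single term $u^{\lceil\xi\rceil} v^{2^{\lceil\xi\rceil}} = u^{\xi + \eta - 2^\eta}$ (with $\eta = \lceil\xi\rceil - \xi \in [0,1)$) via the elementary inequality $\eta - 2^\eta \le 0$. You instead bound every term directly with a case split at $\xi$: for $k \le \xi$ you throw away $v^{2^k} \le 1$ and use $u^k \le u^\xi$, and for $k > \xi$ you invoke the identity $v^{2^\xi} = u^{-1}$ to write $u^k v^{2^k} = u^{k - 2^{k-\xi}}$ and reduce to $j \le 2^j$ for $j \ge 0$. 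Both arguments pivot on the same identity $v^{2^\xi} = u^{-1}$ and reduce to an elementary calculus fact, so the overlap is substantial; what yours buys is a cleaner statement that avoids locating the argmax. What the paper's ratio-test version buys is a by-product: the explicit monotonicity of $k \mapsto u^k v^{2^k}$ for $k \ge \xi$, which is invoked again in Step~2 to justify the integral comparison $\sum_{k=M+1}^\infty u^k v^{2^k} \le \int_M^\infty u^x v^{2^x}\,dx$. If one adopted your proof of the lemma wholesale, that monotonicity would still need to be established separately before Step~2.
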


\begin{proof}
  Let $c_k = c u^k v^{2^k}$. Then 
$$
\frac{c_{k+1}}{c_k} = \frac{cu^{k+1} v^{2^{k+1}}}{cu^k v^{\dyad }} = u
v^{\dyad } \, ,
$$
and $1+ c_k$ is decreasing, \fif\ $uv^{\dyad }\leq 1$. Thus $1+c_k$ is
decreasing, \fif\ $\dyad
\geq \tfrac{\ln u}{\ln v\inv} = 2^\xi $.  Set $K=\lceil \xi \rceil $. Then
$1+c_k$ is decreasing for  $k\geq K=\lceil \xi \rceil
$, increasing for $k< K$, and  the maximum of $c_k$ is taken at
$K$. Writing $K= \lceil \xi \rceil  = \xi + \eta $ for
some $\eta \in [0,1)$ and using~\eqref{eq:cd6}, we find that 
\begin{align*}
u^K v^{2^K} &= u^{\xi + \eta } \big( v^{2^\xi  }\big) ^{2^\eta } \\
&= u^{\xi +\eta -2^\eta } \leq u^\xi \, ,    
\end{align*}
as claimed. (In fact $u^{\xi +\eta -2^\eta } \leq u^{\xi - 0.91}$.)
\end{proof}

\vs

\textbf{Step~2.} \emph{ Split the infinite product \eqref{eq:cd3} into a finite product $\prod
_{k=0}^M$ and a remainder  $\prod _{k=M+1}^\infty $ and estimate the
remainder first.}

We first 
determine a  cut-off index $M$,  so
that $\prod _{k=M+1}^\infty (1+cu^k v^{\dyad }) \leq e$. 

\begin{lem}
  Let $K=\frac{1}{\ln 2 - 1/2}$ and choose $M$, such that  
  \begin{equation}
    \label{eq:cd14}
    M+1  \leq \xi + 2 \ln _2 \Big( \max (\xi, \frac{\ln (Kc)}{\ln
    u})\Big) \leq M+2 \, . 
  \end{equation}
Then 
$$
 \prod _{k=M+1}^\infty (1+cu^k v^{2^k}) \leq e \, .
$$ 
\end{lem}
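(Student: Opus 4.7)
The plan is to take logarithms and reduce the assertion $\prod_{k\geq M+1}(1+cu^k v^{2^k})\leq e$ to the sum bound $\sum_{k \geq M+1}\ln(1+cu^k v^{2^k}) \leq 1$, which in turn I would handle via the elementary inequality $\ln(1+x)\leq x$ and the estimate $\sum t_k \leq 1$, where $t_k := cu^k v^{2^k}$. The single most useful rewriting is to invoke the defining relation $2^\xi = \ln u/\ln v^{-1}$, which gives $v^{2^k} = u^{-2^{k-\xi}}$ and so
$$ t_k = c u^{\,k - 2^{k-\xi}}. $$
In this form the doubly exponential decay above the threshold $k=\xi$ is transparent.

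The key technical observation will be the super-exponential rate of decrease of $t_k$ for $k\geq M+1$. The successive ratios $t_{k+1}/t_k = u v^{2^k} = u^{1 - 2^{k-\xi}}$ are controlled by $2^{j_0}$, where $j_0 := M+1-\xi$. The choice of $M$ forces $j_0 \geq 2\log_2 X - 1 \geq 3$ (using $X\geq \xi\geq 4$), hence $2^{j_0}\geq 8$, so the first ratio is at most $u^{-7}$ and subsequent ratios shrink much faster. Consequently $\sum_{k>M+1} t_k$ is essentially negligible compared with $t_{M+1}$, and it suffices to bound the single quantity $t_{M+1}$.

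Writing $t_{M+1}=cu^{\xi + j_0 - 2^{j_0}}$ and using that $j\mapsto j - 2^j$ is decreasing for $j\geq 1$, the worst case is $j_0 = 2\log_2 X - 1$, giving $t_{M+1}\leq c\, u^{\xi + 2\log_2 X - 1 - X^2/2}$. I would then case-split according to which argument realises $X = \max(\xi,\ln(Kc)/\ln u)$: in Case~1 ($X=\xi$) the definition delivers $c\leq u^\xi/K$, producing $t_{M+1}\leq u^{2\xi + 2\log_2 \xi - 1 - \xi^2/2}/K$; in Case~2 ($X = \ln(Kc)/\ln u$) one has $Kc = u^X$, giving the analogue $t_{M+1}\leq u^{\xi + X + 2\log_2 X - 1 - X^2/2}/K$. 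In both cases the quadratic term $-X^2/2$ dominates, so $t_{M+1}$ is small as soon as $X$ is moderately large.

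I expect the main obstacle to be the boundary case $\xi = 4$ (i.e.\ $\ln u/\ln v^{-1}$ at its minimal admissible value $16$), where the negative contribution $-\xi^2/2=-8$ just barely beats the positive $2\xi + 2\log_2\xi - 1 = 11$, leaving a residual exponent of $3$ in the worst sub-case. The precise value $K = (\ln 2 - 1/2)^{-1}$ is calibrated exactly for this edge: one has to retain $\ln(1+t_{M+1})$ rather than replace it by $t_{M+1}$, exploiting the genuine gap between these quantities when $t_{M+1}$ is of order $e-1$, so as to absorb the final slack and conclude $\sum_{k\geq M+1}\ln(1+t_k)\leq 1$. This delicate calibration is the only nontrivial arithmetic in the proof; everything else is bookkeeping around the identity $v^{2^k}=u^{-2^{k-\xi}}$.
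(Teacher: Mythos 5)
Your strategy---bound $\ln\bigl(\prod(1+t_k)\bigr)\leq \sum t_k$, pass to $t_k=c\,u^{k-2^{k-\xi}}$, and control the tail via the super-exponential decay of successive ratios---is a genuinely different route from the paper. The paper bounds $\sum_{k>M} t_k$ by the integral $\int_M^\infty u^x v^{2^x}\,dx$, substitutes $y = 2^x\ln v^{-1}$ to recognise an incomplete Gamma function $\Gamma(\log_2 u+1, 2^M\ln v^{-1})$, and extracts a clean bound $K\,t_M$ from its asymptotic expansion. Your elementary ratio/dominant-term argument could in principle replace this, but there are two concrete gaps.

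First, your explanation of the constant $K=(\ln 2-1/2)^{-1}$ is not where it comes from. In the paper $K=\bigl(\ln 2\,(1-(\ln 4)^{-1})\bigr)^{-1}$: the factor $\ln 2$ is the Jacobian of the substitution, and $(1-(\ln 4)^{-1})^{-1}$ is the geometric-series bound for the remainder of the Gamma-function expansion. It has nothing to do with the gap between $\ln(1+t)$ and $t$. Since $K$ enters the lemma only through the \emph{choice} of $M$ (the threshold where $c\,u^Mv^{2^M}$ first drops below a prescribed size), any replacement argument must reproduce an equivalent quantitative estimate of the tail---your sketch does not do this.

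Second, and more seriously, the calibration you invoke at the edge case does not actually close the argument. As you yourself compute, with $\xi=4$ and the worst sub-case $X=\xi$, the smaller admissible value of $M$ in \eqref{eq:cd14} leaves $t_{M+1}\leq u^3/K$. Replacing $t_{M+1}$ by $\ln(1+t_{M+1})$ gains roughly a factor of order $\ln(t_{M+1})/t_{M+1}$, which is nowhere near enough when $u$ is moderate: already for $u=4$ one gets $t_{M+1}\approx u^3/K\approx 12.4$ and $\ln(1+t_{M+1})\approx 2.6>1$, and the later factors $t_{M+2},t_{M+3},\dots$ are positive, so $\sum_{k\geq M+1}\ln(1+t_k)>1$. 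So ``exploiting the log'' cannot absorb the residual exponent $3$; one needs the larger of the two admissible $M$, and the argument must force it (or one must insert the missing factor $(2^M\ln v^{-1})^{-1}<1$ that the Gamma-function estimate actually supplies). A proof along your lines would need to carry out the boundary arithmetic explicitly and show the tail really is $\leq 1$; at present this step is asserted, not proved, and the assertion is false as stated.
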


\begin{proof}
  Using the estimate
$$
1+cu^kv^{\dyad } \leq \exp \Big(cu^kv^{\dyad }\Big)\, ,
$$ 
the remainder term is bounded by 
$$ 
    \prod _{k=M+1}^\infty (1+cu^k v^{2^k}) \leq \exp \Big(
    \sum _{k=M+1}^\infty cu^k v^{2^k} \Big) \, .
$$
By Step~1  the sequence $k\to u^k v^{\dyad}$ is
  decreasing for $k\geq M \geq \xi$, and we 
estimate the occurring sum by the integral
$$
\sum  _{k=M+1}^\infty u^k v^{2^k} \leq \int _M ^\infty u^x v^{2^x} \, dx \, .
$$ 
After the substitution $y= 2^x \ln v\inv $ ( precise details: $ x= \log _2 (y/\ln
v\inv )$ and $dx= \tfrac{1}{\ln 2 } \, \tfrac{dy}{y}$, $u^x =
\Big(\tfrac{y}{\ln v\inv }\Big)^{\log
  _2 u} $) the integral turns into 
\begin{equation}
  \label{eq:cd12}
  \int _M ^\infty u^x v^{2^x} \, dx = \frac{1}{\ln 2}\, \frac{1}{(\ln v\inv)^{\log _2
      u}} \int _{2^M \ln v\inv } ^\infty y ^{\log _2 u  } e^{-y} \,
  dy \, .
\end{equation}
The new integral is an  incomplete  Gamma-function defined as 
$$
\Gamma (a,x) = \int _x ^\infty t^{a-1} e^{-t} \, dt  = x^{a-1} e^{-x} \Big(
1+\sum _{j=1}^{n-1} \frac{(a-1)(a-2) \dots (a-j)}{x^j} + R_n(x,a) \Big) \, .
$$
For $n\geq a-1 $ the remainder $R_n$  satisfies the 
estimate
 $R_n(x,a) \leq \frac{|(a-1) \dots
   (a-n)|}{x^n}$. See~\cite[8.11(i)]{AS64} or % \cite[6.5.31]{AS64} or
 verify directly with integration by parts. 

In our case  $a= \log _2 u +1$ and $x=2^M \ln v\inv   \geq 2 \cdot 2^\xi \ln
v\inv   = 2\ln u$. If  we choose  $M\geq \xi +1$ and $n$ such that d $a-1 \leq n < 2a
+1$, then 
$$
0\leq \frac{|a-j|}{x}\leq \frac{a-1}{x} \leq \frac{\log _2 u}{2^M \ln v\inv
} \leq \frac{1}{2\ln 2} = \frac{1}{\ln 4} <1 \, ,
$$
and consequently
$$
1+ \sum _{j=1}^{n-1} \frac{(a-1)(a-2) \dots (a-j)}{x^j} + R_n(x,a)
\leq 1+ \sum
_{j=1}^n \frac{1}{(\ln 4)^j} \leq \frac{1}{1- (\ln 4)\inv} \, ,
$$
which is a bound independent of $x$ and $a$ and $n$. 

Combining these estimates, we find that 
\begin{align*}
   \int _M^\infty u^x v^{2^x}\, dx &= \frac{1}{\ln 2 (\ln v\inv
    )^{\log _2 u}} \Gamma (\log _2 u +1, 2^M \ln v\inv ) \\
& \leq \frac{1}{\ln 2 (1 - (\ln 4)\inv)} \frac{(2^M \ln v\inv ) ^{\log
  _2 u}}{(\ln v\inv )^{\log _2 u}} \, \exp \Big(-2^M \ln v\inv \Big)
\\
&= K  u^M \exp \Big(-2^M \ln v\inv \Big) \, .
\end{align*}
Here we set $K= \Big(\ln 2 (1 - (\ln 4)\inv)\Big)\inv = \Big(\ln 2 -
1/2\Big)\inv = 5.177 >1$. 
Now we choose $M\geq \xi +1$ so that 
\begin{equation}
  \label{eq:cd16}
   K c u^M \exp \Big(-2^M \ln v\inv \Big) \leq 1 \, .
\end{equation}
Then
$$
\prod _{k=M+1}^\infty (1+cu^k v^{2^k}) \leq  \exp \Big(
    \sum _{k=M+1}^\infty cu^k v^{2^k} \Big) \leq \exp \Big( Kc u^M
    \exp \Big(-2^M \ln v\inv \Big) \Big) \leq e \, .
$$
For an explicit estimate of $M$ we note that \eqref{eq:cd16} is
equivalent to the estimate
\begin{equation}
  \label{eq:cd17}
  2^M  \ln v \inv - M \ln u  \geq \ln (Kc) \, .
\end{equation}
By  writing $M = \xi +\lambda $ and  using $2^\xi = \ln u /\ln
v\inv $, we obtain that
$$
2^{\lambda } - \lambda \geq \xi    + \frac{\ln (Kc)}{\ln u} \, .
$$
Again since $\xi \geq 4$,  we may use
$$
2^\lambda - \lambda \geq 2^{\frac{\lambda}{2}+1} \geq 2 \max \Big( \xi
, \frac{\ln (Kc)}{\ln u} \Big)
$$
and we obtain that $\lambda \geq 2 \log _2 \max \Big( \xi
, \frac{\ln (Kc)}{\ln u} \Big)$. 
We find that 
 $$M+1  \leq \xi + 2 \log _2 \Big( \max (\xi, \frac{\ln (Kc)}{\ln
    u})\Big) \leq M+2 \,  
 $$ 
is a  workable choice for $M$. 
\end{proof}

\textbf{Step~3.} \emph{Estimate the finite part of $\prod _{k=0}^M  $.}
We use Lemma~\ref{maxlem} and the obvious estimate
$$
\prod _{k=0}^M (1+cu^k v^{2^k}) \leq \max _{k=0, \dots , M} (1+cu^k
v^{2^k})^{M+1} \leq (1+cu^\xi )^{M+1} \, .
$$
Since the definition of $M$ contains a $\max $, we distinguish two
cases. 

\textbf{Case~1.} If $\xi \geq \frac{\ln (Kc)}{\ln u}$ or,
equivalently,  $Kc  \leq u^\xi
$, then 
$$
M+1 \leq \xi + 2 \log _2 \xi \leq 2\xi 
\, 
$$
and, since $\xi \geq 4$,  $c\geq 1$, and $u\geq 2$,  
$$
\frac{M+1}{cu^\xi } \leq \frac{2\xi}{cu^\xi } \leq 1 \, .
 $$
% To obtain a reasonable estimate, we use
% $\log _2 \xi ^2 \leq \xi $, which is true for $\xi \geq 3$.  

Then 
$$
\Big( 1+ \frac{1}{cu^\xi }\Big) ^{M+1 } \leq  \exp \Big( \frac{M+1 }{cu^\xi } \Big) \leq \exp (1) \, ,
$$
and, since $K\geq 1$ and $\ln ^2 (a/b) \leq 2(\ln ^2 a + \ln ^2 b\inv )$,  
\begin{align}
  (cu^\xi )^{M+1}  &\leq \Big(\frac{u^{2\xi }}{K}\Big)^{M+1} \leq u^{4
    \xi ^2}  \label{finalshit} \\
&\leq \exp \Big( 4  \frac{\ln u}{\ln ^2 2} \, \ln ^2 \big( \frac{\ln
  u}{\ln v\inv} \big) \Big) \notag  \\
&\leq \exp \Big(  \frac{8 \ln u \ln ^2 (\ln u )}{\ln ^2 2} \Big) \, \exp \Big(
\frac{8 \ln u}{\ln ^2 2} \, \ln ^2 \big( \frac{1}{\ln v\inv} \big) \Big)
\, .  \notag 
\end{align}
Thus the final estimate is
$$
\Big( 1+ cu^\xi \Big)^{M+1} = \Big( 1+ \frac{1}{cu^\xi }\Big) ^{M+1 }
(cu^\xi )^{M+1} \leq C(u) \exp  \Big(
\frac{8 \ln u}{\ln ^2 2} \, \ln ^2 \big( \frac{1}{\ln v\inv} \big) \Big)
$$
with the constant $C(u) = e \exp \Big( \frac{8 \ln u \ln ^2 (\ln
u  )}{\ln ^2 2} \Big)$ that depends only on $u$. 

\textbf{Case~2.} If $\xi \leq \frac{\ln (Kc)}{\ln u}$ or
equivalently, $Kc  \geq u^\xi
$, then 
$$
M+1 \leq \xi +2\log _2 \frac{\ln (Kc)}{\ln u} \leq 2 \frac{\ln
  (Kc)}{\ln u} \, .
$$
In this case we obtain similarly
\begin{align*}
  (1+cu^\xi ) ^{M+1} &\leq (1+Kc^2)^{M+1} \leq (Kc^2)^{2  \frac{\ln
  (Kc)}{\ln u}} (1+ \frac{1}{Kc^2})^{2  \frac{\ln
  (Kc)}{\ln u}} \\
&\leq e \, \exp \Big( \ln (Kc^2) \, 2 \, \frac{\ln
  (Kc)}{\ln u} \Big) \\
&\leq e \exp   \Big(   \, \frac{2}{\ln u}  \ln ^2 (Kc^2) \Big) \\
&\leq C'(u) \exp \Big(   \, \frac{8}{\ln u}  \ln ^2 c \Big)
\end{align*}
with a constant $C'(u) = e \exp \Big(  \frac{4\ln ^2 K}{\ln u} \Big)$.  

The proof of Proposition ~\ref{asf} is now finished.  \hfill $\Box$

\begin{rem}
  The proof shows that the asymptotic estimates in Theorem~\ref{thm:asymptotics} are in fact
  \begin{equation}
    \label{eq:hm25}
\norm{a \inv}_\mA = o\Big( \frac{\norm{a
      }_{\mA}}{\norm{a}^2_{\mB}} \,\, e^{\gamma _2\ln ^2 \kappa
      (a)^2} \Big) \, \qquad \text{ for } \kappa (a) \to \infty \, .
      \end{equation}
In  fact, in \eqref{finalshit} we have given away a factor 
$$
K^{-M-1}\leq K^{-\xi } \leq K^{-2 \ln \kappa (a) \ln \ln u } \, .
$$   
Taking this factor into account, we obtain the slightly stronger
statement of \eqref{eq:hm25}.
\end{rem}

\section{Related  Concepts of Norm Control and Lack of Norm Control} 
\label{sec:rsc}

In our definition of \nci\ we have used only the two quantities
$\norm{a}_\mA $ and $\norm{a\inv }_\mB $ to control
$\norm{a\inv}_{\mA} $. By adding more parameters, one is lead to
alternative concepts of \nci . 

(i) Sun~\cite{sun05,Sun07a} considers a nested pair of Banach algebras $\mA
\subseteq \mB $ with a modified differential norm satisfying 
\begin{equation}
  \label{eq:sun}
   \norm{a^2}_\mA \leq 2C \norm{a}_\mA^{1+\theta} \norm{a}_{\mB}^{1-\theta} \,
\end{equation}
for a fixed exponent $\theta , 0< \theta <1$. Versions of Lemma~\ref{brandlem} and
Theorem~\ref{thm-diffsubalg-nci} hold for such norms. In fact, our
proof is a modification of~\cite{sun05}. Probably the asymptotic
estimates of Theorem~\ref{main0} could also be  adapted to this case. Norms
satisfying~\eqref{eq:sun} are important in the study of algebras of
infinite matrices with off-diagonal decay.

% (ii) Even more general approaches to smoothness (called
% $F$-\emph{Leibniz seminorms} by the authors) have been given by David
% Kerr and Hanfeng Li. ***********

(ii) In algebras with an approximation scheme % , so-called
% filtered algebras in the terminology of ~\cite{},
one may also  include an approximation
parameter to control the  norm $\norm{a\inv }_\mA $. This fundamental
idea is used first in  the deep work of Baskakov on the inversion of matrices with off-diagonal
decay~\cite{Bas97}.  To give an explicit example we formulate Theorem~4
of~\cite{Bas97} for the Wiener algebra $\mA (\bT )$. 
Let $a(t) = \sum _{k\in \bZ } a_k e^{2\pi i k t}$ with norm
$\norm{a}_{\mA } = \sum _k |a_k|$ and define the tail function of $a$
by 
$$
\psi _a (k) = \sum _{|j| \geq k } |a_j| \, .
$$
If $a\in \mA $ and $a(t) \neq 0$ everywhere, then 
$$
\norm{a\inv }_{\mA } \leq 64 \norm{a}_\infty \norm{a\inv }_\infty ^2
\psi _a \Big(\frac{1}{4+32 \norm{a}_\infty \norm{a\inv }_\infty ^2} \Big) \,
.
$$
In contrast to Definition~\ref{dc:norm} this estimate depends not only on the
condition number $\norm{a}_\infty \norm{a\inv}_\infty$, but also on the
rate of approximation of $a$ by trigonometric polynomials, as
expressed by the tail function $\psi $.  

Similar estimates  can be found in  Tao's   quantitative version of Wiener's
Lemma~\cite{Taowiener}.

\vs

\textbf{Lack of Norm Control.} 
Currently there are extremely few examples of Banach subalgebras
without  norm control. The fundamental  example is the algebra $\mA = \mA (\bT )
$ of absolutely convergent Fourier series as a subalgebra of the
algebra $C(\bT )$ of continuous functions on the
torus~\cite{Nik99}. In his investigation of the corona problem  Nikolski
constructed further and rather sophisticated examples of multiplier
algebras that lack norm control~\cite{Nik09}.

The example 
$\mA (\bT ) \subseteq C(\bT )$ can be generalized to group
algebras~\cite[1.2.3]{Nik99}. Precisely, let  $G$  be a  discrete group that contains at
least one  torsion free element. We choose  the  $\ell ^1$-algebra
$\mA = \ell ^1(G)$ with respect to convolution and  $\mB = 
C^*(G)$ the enveloping $C^*$-algebra. Then $\ell ^1 (G)$ always lacks
norm control in $C^*(G)$. Since $\bZ $ is a subgroup of $G $, 
$\ell ^1(\bZ ) \simeq \mA (\bT )$ is a closed subalgebra of $\ell
^1(G)$. Likewise, $C(\bT )$ can be identified with  a closed
subalgebra of $C^*(G)$. Consequently the lack of norm control follows
immediately from the example $(\mA (\bT ), C(\bT)$.

%%% Local Variables: 
%%% mode: latex
%%% TeX-master: "normcontinv2"
%%% End: 

\def\cprime{$'$} \def\cprime{$'$} \def\cprime{$'$} \def\cprime{$'$}
  \def\cprime{$'$}

 % \bibliographystyle{abbrv}
 % \bibliography{general,new}

\end{document}